\DeclareMathOperator{\grad}{grad}
\begin{document}
\title{Lagrange stability of semilinear differential-algebraic equations and application to nonlinear electrical circuits}

\author{Maria S. Filipkovska}

\address{B.~Verkin Institute for Low Temperature Physics and  Engineering of the National Academy of Sciences of Ukraine\\
\scriptsize pr. Nauky 47, 61103 Kharkiv, Ukraine\\
V.N. Karazin Kharkiv National University\\
\scriptsize Svobody Sq. 4, 61022 Kharkiv, Ukraine\\
\smallskip {\rm E-mail: filipkovskaya@ilt.kharkov.ua}}


\date{} 

\protect\maketitle \markright{Lagrange stability and instability of semilinear DAEs and applications}
\renewenvironment{proof}{\begin{trivlist}\item[]
{\quad\, P r o o f. }}{\hfill\rule{0.5em}{0.5em}\end{trivlist}}
\newtheorem{theorem}{\,\quad Theorem}             
\newtheorem{lemma}{\,\quad Lemma}                  
\newtheorem{definition}{\,\quad Definition}       
\newtheorem{corollary}{\,\quad Corollary}          
\newtheorem{proposition}{\,\quad Proposition}      
\newtheorem{example}{\,\quad\rm E x a m p l e }   
\newtheorem{remark}{\,\quad\rm R e m a r k }       

\newcommand{\R}{{\mathbb R}}
\newcommand{\Rn}{{\mathbb R}^n}
\newcommand{\Rm}{{\mathbb R}^m}
\newcommand{\Cn}{{\mathbb C}^n}
\newcommand{\Cm}{{\mathbb C}^m}
\newcommand{\Ra}{{\mathbb R}^a}

\newcommand*{\hm}[1]{#1\nobreak\discretionary{}%
{\hbox{$\mathsurround=0pt #1$}}{}}

\setlength{\textfloatsep}{5pt plus 2.0pt minus 2.0pt}
\setlength{\floatsep}{1pt plus 1.0pt minus 1.0pt}
\setlength{\intextsep}{5pt plus 2.0pt minus 2.0pt}

\renewcommand{\thefigure}{\thesection.\arabic{figure}}
\renewcommand{\thetheorem}{\thesection.\arabic{theorem}}
\renewcommand{\thedefinition}{\thesection.\arabic{definition}}
\renewcommand{\theequation}{\thesection.\arabic{equation}}
\renewcommand{\theremark}{\thesection.\arabic{remark}}
\renewcommand{\theexample}{\thesection.\arabic{example}}

\numberwithin{figure}{section}
\numberwithin{theorem}{section}
\numberwithin{definition}{section}
\numberwithin{remark}{section}
\numberwithin{example}{section}
\numberwithin{equation}{section}

\begin{abstract}
We study a semilinear differential-algebraic equation (DAE) with the focus on the Lagrange stability (instability). The conditions for the existence and uniqueness of global solutions (a solution exists on an infinite interval) of the Cauchy problem, as well as conditions of the boundedness of the global solutions, are obtained. \linebreak
Furthermore, the obtained conditions for the Lagrange stability of the semilinear DAE guarantee that every its solution is global and bounded, and, in contrast to theorems on the Lyapunov stability, allow to prove the existence and uniqueness of global solutions regardless of the presence and the number of equilibrium points. We also obtain the conditions of the existence and uniqueness of solutions with a finite escape time (a solution exists on a finite interval and is unbounded, i.e., is Lagrange unstable) for the Cauchy problem. We do not use constraints of a global Lipschitz condition type, that allows to use the work results efficiently in practical applications. The mathematical model of a radio engineering filter with nonlinear elements is studied as an application. The numerical analysis of the model verifies the results of theoretical investigations.

{\em  Key words}: differential-algebraic equation, Lagrange stability, instability, regular pencil, bounded global solution, finite escape time, nonlinear electrical circuit\smallskip

{\em Mathematics  Subject  Classification  2010}: 34A09, 34D23, 65L07.

\end{abstract}\smallskip

\section{Introduction}\label{Intro}

Differential-algebraic equations (DAEs), which are also called descriptor, algebraic-dif\-feren\-tial and degenerate differential equations, have a wide range of practical applications. Certain classes of mathematical models in radioelectronics, control theory, economics, 
robotics technology, mechanics and chemical kinetics are described by semilinear DAEs. Semilinear DAEs comprise in particular semiexplicit DAEs and in turn can be attributed to quasilinear DAEs.  The Lagrange stability of a DAE guarantees that every its solution is global and bounded. The presence of a global solution of the equation guarantees a sufficiently long action term of the corresponding real system. The properties of boundedness and stability of solutions of the equations describing mathematical models are used in the design and synthesis of the corresponding real systems and processes. The application of DAE theory to a study of electrical circuits is presented in various monographs and papers, for example, in \cite{Kunkel_Mehrmann, Dai, Lamour-Marz-Tisch., Riaza, Brenan-C-P, Riaza-Marz, Rabier-Rheinboldt, Reis-Stykel, Rut, OMalley-Kalachev} (see also references in them).

In the present paper, the semilinear differential-algebraic equation (DAE)
\begin{equation}\label{DAE}
 \frac{d}{dt}[Ax]+Bx=f(t,x)
\end{equation}
with a nonlinear function $f\colon [0,\infty)\times \Rn \to \Rn$ and linear operators $A,\, B\colon \Rn \to \Rn$ is considered. The operator $A$ is degenerate (noninvertible), the operator $B$ may also be degenerate. Note that solutions of a semilinear DAE of the form $\displaystyle A\frac{d}{dt}x+Bx=f(t,x)$  must be smoother than solutions of a semilinear DAE of the form \eqref{DAE}.  The availability of a noninvertible operator (matrix) at the derivative in the DAE means the presence of algebraic connections, which influence the trajectories of solutions and impose restrictions on the initial data.
For the DAE \eqref{DAE} with the initial condition
\begin{equation}\label{ini}
 x(t_0)=x_0,
\end{equation}
the initial value $x_0$ must be chosen so that the initial point $(t_0,x_0)$ belongs to the manifold $L_0 = \{(t,x)\! \in\!  [0,\infty) \times  \Rn  \mid  Q_2[Bx-f(t,x)] = 0\}$ (which is also defined in \eqref{soglreg2}, where $Q_2$ is a spectral projector considered in Section~\ref{Prelim}). The initial value $x_0$ satisfying the consistency condition $(t_0,x_0)\in L_0$ is called a consistent initial value.

The influence of the linear part $\frac{d}{dt}[Ax]+Bx$ of the DAE \eqref{DAE} is determined by the properties of the pencil $\lambda A+B$ ($\lambda$ is a complex parameter). It is assumed that $\lambda A+B$ is a regular pencil of index~1, i.e., there exists the resolvent of the pencil $(\lambda A+B)^{-1}$ and it is bounded for sufficiently large $|\lambda |$ (see Section~\ref{Prelim}).
This property of the pencil allows to use the spectral projectors $P_1$, $P_2$, $Q_1$, $Q_2$ which can be calculated by contour integration and reduce the DAE to the equivalent system of a purely differential equation and a purely algebraic equation (see Section~\ref{Prelim}). This is one of the reasons why we use the requirement of index~1 for the characteristic pencil $\lambda A+B$ of the linear part of the DAE and not for the DAE, as, for example, in \cite{Marz,Tischendorf,Lamour-Marz-Tisch.}. Another reason is as follows. The requirement that the DAE has index~1 does not give us the necessary result and is too restrictive for our research (this will be discussed in Section~\ref{Prelim}). It is also worth noting that semilinear DAEs of the form \eqref{DAE} arise in many practical problems, examples of which can be found in the books and papers by R.~Riaza, A.G.~Rutkas, A.~Favini, L.A.~Vlasenko,  A.D.~Myshkis, S.L.~Campbell, L.R.~Petzold,  K.E.~Brenan,  E.~Hairer, G.~Wanner, J.~Huang, J.F.~Zhang,  R.E.~Showalter and other authors (however, in present literature these equations are often written in the form $\frac{d}{dt}[Ax]=g(t,x)$ or in the form of semiexplicit DAE).

The objective of the paper is to find the conditions of the Lagrange stability and instability of the semilinear DAE (see Definitions~\ref{Def-FiniteTime}--\ref{Def-EqLStUnst}). A mathematical model of a radio engineering filter with nonlinear elements is considered as an application.  Note that if the operator $A$ in the semilinear DAE is invertible, then the results obtained in the paper remain valid (in this case the semilinear DAE is equivalent to an ordinary differential equation), however, we are interested in the case of the noninvertible operator.

In Section~\ref{SectThUst} the theorem on the Lagrange stability, which gives sufficient conditions of the existence and uniqueness of global solutions of the Cauchy problem for the semilinear DAE, as well as conditions of the boundedness of the global solutions, is proved. Furthermore, the theorem gives conditions of the Lagrange stability of the semilinear DAE, which ensure that each solution of the DAE starting at the time moment $t_0\! \in\! [0,\infty)$ exists on the whole infinite interval $[t_0,\infty)$ (is global) and is bounded. In Section~\ref{SectThNeust} the theorem on the Lagrange instability, which gives sufficient conditions of the existence and uniqueness of solutions with a finite escape time for the Cauchy problem, is proved.  It is important that the proved theorems do not contain restrictions of a global Lipschitz condition type, including the condition of contractivity, that allows to use them to solve more general classes of applied problems. Note that theorems on the unique global solvability of semilinear DAEs that comprise conditions equivalent to global Lipschitz conditions are known (cf. \cite{Vlasenko1}).
Also, the proved theorems do not contain the requirement that the DAE has index~1 globally (such requirement is contained, for example, in \cite[Thm.~6.7]{Lamour-Marz-Tisch.}). For comparison, the theorems from \cite{Marz, Tischendorf, Lamour-Marz-Tisch.} are considered in Section~\ref{Intro} below and in Section~\ref{Prelim}.

The investigation of the Lagrange stability of the ordinary differential equation (ODE) $\frac{d}{dt}x=f(t,x)$ ($t\ge 0$, $x$ is an $n$-dimensional vector) was made in \cite[Ch.~4]{LaSal-Lef-eng} using the method obtained by extensions of the direct (second) method of Lyapunov. The results of this investigation are made precise and are extended to semilinear DAEs in the present paper.  The existence and uniqueness theorem of a global solution of the Cauchy problem for the semilinear DAE with a singular pencil $\lambda A+B$ was proved in the author's paper \cite{Fil.sing}.  The results on the Lagrange stability of the semilinear DAE with the regular pencil, obtained by the author in \cite{Fil.num_meth}, have been improved and have been applied for a detailed study of evolutionary properties of the mathematical model for a radio engineering filter in the present paper.

The stability of linear DAEs and descriptor control systems described by linear DAEs, was studied by many authors  (see, for example, \cite{Dai, Riaza, Lamour-Marz-Tisch.,Shcheglova-Chistyakov} and references in them).

In \cite{Marz} R.~M\"arz investigated the Lyapunov stability of an equilibrium point of the autonomous ``quasilinear'' DAE
\begin{equation}\label{qDAE}
A\frac{d}{dt}x+g(x)=0,
\end{equation}
where $A\in L(\Rn)$ is singular (noninvertible) and $g\colon\! D\to \Rn$, $D\subseteq \Rn$ open.
The theorem \cite[Thm.~2.1]{Marz} \emph{allows to prove the existence and uniqueness of global solutions only in some} (sufficiently small) \emph{neighborhood of an equilibrium point $x^*$} of \eqref{qDAE}, i.e., $g(x^*)=0$, $x^*\!\in\! D$. If there are the two equilibrium points $x^*_1\! \in\! D$ and $x^*_2\! \in\! D$, $x^*_1\ne x^*_2$, then \emph{the theorem} 
\cite[Thm.~2.1]{Marz} \emph{can not guarantee the existence of a unique global solution in~$D$}.  Namely, if the conditions of the theorem are fulfilled for the equilibria $x^*_1$ and $x^*_2$, then for some initial time moment $t_0$ there exist  the unique global solution $x=\varphi(t)$ of \eqref{qDAE} (with the initial condition \cite[(2.8)]{Marz}) in some neighborhood of $x^*_1$ and the unique global solution $x=\psi(t)$ of \eqref{qDAE} in some neighborhood of $x^*_2$, but this does not guarantee the existence of a unique global solution in $D$.\! \emph{Theorem~\ref{Th_Ust1} allows to prove the existence and uniqueness of global solutions for all possible initial points} (as noted in Remark~\ref{RemConsistIni}), that is,  \emph{regardless of the presence of an equilibrium point, in the presence of several equilibrium points or the infinite number of equilibrium points}, and for a more general equation than~\eqref{qDAE}.

A theorem similar to \cite[Thm.~2.1]{Marz} was proved by C.~Tischendorf \cite{Tischendorf} for the autonomous nonlinear DAE $f(x'(t),x(t))\!=\!0$. The theorem \cite[Thm.~3.3]{Tischendorf} gives conditions of the asymptotic stability (in Lyapunov's sense) of a stationary solution $x^*$, i.e., $f(0,x^*)\!=\!0$. The definition of asymptotic stability from \cite[Section~3]{Tischendorf} is equivalent to the fulfillment of the conditions (i)--(iii) from \cite[Thm.~2.1]{Marz}, and if we take $f(x'(t),x(t))\!=\!Ax'(t)+g(x(t))$, then \cite[Thm.~3.3]{Tischendorf} and \cite[Thm.~2.1]{Marz} will be analogous.

For a global solution of a nonautonomous nonlinear DAE conditions of the asymptotic stability (in Lyapunov's sense) which can also be considered just locally (in a sufficiently small neighborhood of this solution) are given in the theorem \cite[Thm.~6.16]{Lamour-Marz-Tisch.}. Under the conditions of the theorem it is assumed that the regular index-1 DAE has the global solution and a DAE linearized along this solution is strongly contractive \cite[Def.~6.5]{Lamour-Marz-Tisch.}.

It is important to note that the theorem on the Lagrange stability (Theorem~\ref{Th_Ust1}) gives conditions of the existence and uniqueness of global solutions (as well as conditions of the boundedness) independent of the presence and the number of equilibrium points. In contrast to Lyapunov stability, Lagrange stability can be considered as the stability of the entire system, not just of its equilibria.
From this, in particular, it follows that a globally stable dynamic system can be not only monostable (as in the case of the global stability in Lyapunov's sense), but also multistable (cf.~\cite[Section~I]{Wu-Zeng}). A.~Wu and Z.~Zeng \cite{Wu-Zeng} investigated the Lagrange stability of neural networks, which are described by ODEs with delay. It is known that neural networks are also described by DAEs (including semilinear DAEs), therefore the research carried out in the present paper is useful for the analysis and synthesis of the neural networks. Lagrange stability are also used for the analysis of ecological stability. The theorem on the Lagrange instability (Theorem~\ref{Th_Neust1}) can be used, in particular, for the analysis of nonlinear control systems. For example, the investigation of the Lagrange instability allows to find such property as the unboundedness of the response of a nonlinear control system on a finite time interval.

It is also important to note that even for an ordinary differential equation containing a nonlinear part the Lyapunov stability of a nontrivial solution does not imply that the solution is bounded, i.e., Lagrange stable.   Since the DAE considered in the paper contains the nonlinear part, then the Lyapunov stability of its solution does not imply the Lagrange stability. Also, in the general case, the Lyapunov instability does not imply  the Lagrange instability, but the converse assertion is true. Therefore, the proved Lagrange instability theorem can also be regarded as a Lyapunov instability theorem.

Thus, the research of the Lagrange stability of semilinear DAEs is of interest both to the DAE theory and to various applied problems.
The Lagrange stability of different types of ODEs and its applications are considered in many works, e.g., \cite{Wu-Zeng, LaSal-Lef-eng, Andrz-Awr, Bac-Lion}.
However, in \cite{Marz, Tischendorf, Lamour-Marz-Tisch.} and other cited works, the Lagrange stability of DAEs has not been researched.

In Sections~\ref{MatModUst} and~\ref{MatModNeust} the mathematical model of a radio engineering filter with nonlinear elements is researched with the help of the presented theorems. The restrictions on the initial data and parameters for the electrical circuit of the filter, which ensure the existence, the uniqueness and boundedness of global solutions, and the existence and uniqueness of  solutions with a finite escape time for the dynamics equation of the electrical circuit are obtained. The concrete functions and quantities (including nonlinear functions to be not global Lipschitz) defining the circuit parameters and satisfying the obtained restrictions are given. The numerical analysis of the mathematical model is carried out.

The paper has the following structure. The main theoretical results are given in Sections~\ref{SectThUst},~\ref{SectThNeust}, namely, in Section~\ref{SectThUst} (Section~\ref{SectThNeust}) the theorem on the Lagrange stability (instability) of the DAE is proved. In Sections~\ref{MatModUst},~\ref{MatModNeust} the mathematical model of the nonlinear radio engineering filter is researched with the help of the obtained theorems, conclusions and explanations of the obtained results from a physical point of view are presented (Subsections~\ref{ConclStab},~\ref{ConclInstab}) the presented, and the numerical analysis of the mathematical model is carried out (Subsections~\ref{NumStab},~\ref{NumInstab}). A more detailed discussion of the obtained results is given above. in Section~\ref{Prelim} we give a problem setting, preliminary information, definitions and corresponding explanations. In Section~\ref{Intro} the object and subject of the study are indicated, the actuality of the work is justified, the obtained results are discussed, auxiliary information, a literature review and a comparison with the known results are given. Section~\ref{Conclusions} contains general conclusions.

The following notation will be used in the paper: $E_X$\! is the identity operator in the space $X$; $\left. A\right|_X$ is the restriction of the operator $A$ to $X$;  $L(X,Y)$ is the space of continuous linear operators from $X$ to $Y$, $L(X,X)\!=\!L(X)$; $x^T$ is the transpose of $x$; the notation $\int\limits_c^{+\infty}\! f(t)\, dt\! <\! +\infty$ means that the integral converges; the notation $ \int\limits_c^{+\infty}\! f(t)\, dt \!=\! \infty$  means that the integral does not converge; $\delta _{ij}$ is the Kronecker delta. Sometimes the function $f$ is denoted by the same symbol $f(x)$ as its value at the point $x$ in order to emphasize (explicitly indicate) that the function depends on the variable $x$, but from the context it will be clear what exactly is meant.

 \section{Problem setting and preliminaries}\label{Prelim}

Consider the Cauchy problem for the semilinear DAE:
\[
 \frac{d}{dt}[Ax]+Bx=f(t,x), \eqno \eqref{DAE}
\]
\[
 x(t_0)=x_0,   \eqno \eqref{ini}
\]
\noindent where $t,\, t_0 \ge 0$, $x,\, x_0\!\in\!\Rn$, $f\colon [0,\infty)\times \Rn \to \Rn$  is a continuous function,  $A,\, B\colon \Rn\! \to \Rn$ are linear operators being corresponded to $n\times n$ matrices $A, B$. The operator $A$ is degenerate (noninvertible), the operator $B$ may also be degenerate. The matrix pencil (as well as the corresponding operator pencil) $\lambda A+B$ is \textit{regular}, i.e., $\det(\lambda A+B)\not \equiv 0$.
\begin{definition}
 A function $x(t)$ is called a \textit{solution} of the Cauchy problem \eqref{DAE}, \eqref{ini} on some interval $[t_0,t_1)$, $t_1\le \infty $, if $x \in C([t_0,t_1),\, \Rn)$, $Ax \in C^1([t_0,t_1),\Rn)$, $x$~satisfies the equation \eqref{DAE} on $[t_0,t_1)$ and the initial condition \eqref{ini}.
\end{definition}

It is assumed that $\lambda A+B$ is a regular \emph{pencil of index 1}, that is, there exist constants $C_1$,~$C_2 >0$ such that
\begin{equation}\label{index1}
   \left\|(\lambda A+B)^{-1}\right\| \le C_1,\quad  |\lambda|\ge C_2.
\end{equation}
For the pencil $\lambda A+B$ satisfying \eqref{index1} there exist the two pairs of mutually complementary projectors $P_1$, $P_2$ and $Q_1$, $Q_2$ (i.e.,  $P_i P_j \hm= \delta _{ij} P_i$, $P_1\!+\!P_2\!=\!E_{\Rn}$, and $Q_i Q_j \hm= \delta _{ij} Q_i$, $Q_1\!+\!Q_2\!=\!E_{\Rn}$, $i,j=1,2$) which were first introduced by A.G.~Rutkas \cite[Lemma~3.2]{Rut} and can be constructively determined by the formulas similar to \cite[(5),~(6)]{Rut_Vlas} (where $X=Y=\Rn$) or \cite[(3.4)]{Rut} (and for the real operators $A$, $B$ the projectors are real). These projectors decompose the space $\Rn$ into direct sums of subspaces
\begin{equation}\label{Proj.2}
 \Rn =X_1 \dot{+}X_2,\quad  \Rn=Y_1 \dot{+} Y_2,\quad  X_j =P_j \Rn,\quad Y_j =Q_j \Rn,\quad  j=1,2,
\end{equation}
such that the operators $A$, $B$ map $X_j$ into $Y_j$, and the induced operators $A_j \hm= \linebreak
=\left. A\right|_{X_j}\colon  X_j \to Y_j$, $B_j =\left. B\right|_{X_j}\colon X_j \to Y_j$, ${j=1,2}$, ($X_2=Ker\, A$,\, $Y_1=A\Rn =A_1 X_1$) are such that $A_2=0$, inverse operators $A_1^{-1}\! \in\! L(Y_1,X_1)$, ${B_2^{-1}\! \in\! L(Y_2,X_2)}$ exist (cf. \cite[Lemma~3.2]{Rut}, \cite[Sections~2,6]{Rut_Vlas}) and
\begin{equation}\label{Proj.3}
 AP_j =Q_j A,\quad BP_j =Q_j B,\quad j=1,2.
\end{equation}
With respect to the decomposition \eqref{Proj.2} any vector $x\in \Rn$ can be uniquely represented as the sum
\begin{equation}\label{xdecomp}
x=x_{p_1} +x_{p_2}, \quad x_{p_1} =P_1 x\in X_1,\; x_{p_2} =P_2 x\in X_2.
\end{equation}
This representation will be used in future. We will also use the auxiliary operator $G\in L(\Rn)$, (cf. \cite[Sections~2,~6]{Rut_Vlas})
\[
G=AP_1 +BP_2=A+BP_2,\quad GX_j =Y_j,\quad j=1,2,
\]
which has the inverse operator $G^{-1}\!\in\! L(\Rn)$ with the properties $G^{-1} AP_1 =P_1$, $G^{-1} BP_2 \hm=P_2$, $AG^{-1} Q_1 =Q_1$, $BG^{-1} Q_2 =Q_2$.

One can see that the projectors  $P_1$, $P_2$, $Q_1$, $Q_2$ allow to reduce the DAE to the equivalent system of a purely differential equation and a purely algebraic equation. Applying $Q_1$, $Q_2$ to the equation \eqref{DAE} and taking into account \eqref{Proj.3}, we obtain the equivalent system
\begin{equation*}
\left\{\begin{array}{l}
 \displaystyle{\frac{d}{dt}} (AP_1 x)+BP_1 x = Q_1 f(t,x), \\
 Q_2 f(t,x)-BP_2 x = 0.
 \end{array}\right.
\end{equation*}
\noindent Further, using $G^{-1}$, we obtain the system, which is equivalent to the DAE \eqref{DAE}:\,
\begin{equation}\label{prThreg1}
\left\{\begin{split}
&\frac{d}{dt} (P_1 x)=G^{-1} \big[ - BP_1 x + Q_1 f(t,P_1x+P_2x)\big], \\
& G^{-1} Q_2 f(t,P_1x+P_2x)-P_2 x  = 0.
\end{split}\right.
\end{equation}
\begin{remark}
We consider different notions of an index of the pencil, an index of a DAE, a relationship between them and their relationship with the mentioned notion of the pencil of index 1.  In \cite[Section~6.2]{Vlasenko1} the maximum length of the chain of an eigenvector and adjoint vectors of the matrix pencil $A+\mu B$ at the point $\mu = 0$ is called the index of the matrix pencil $\lambda A+B$.  Following \cite[Sections~6.2,~2.3.1]{Vlasenko1}, the regular pencil $\lambda A+B$ with the property \eqref{index1} is called a regular pencil of index 1.  Taking into account the properties of the projectors $P_j$, $Q_j$ and the induced operators $A_j$, $B_j$, ${j=1,2}$, if the condition \eqref{index1} holds, then the index of the pencil (or the index of nilpotency of the matrix pencil) $(A,B)$ is~1 in the sense as defined in the works of C.W. Gear, L.R. Petzold, for example, \cite[p.~717--718]{Gear_Petz84} (it is easy to verify using \cite[Thm.~2.2]{Gear_Petz84}).  In \cite[Def.~1.4]{Lamour-Marz-Tisch.}, the index of nilpotency of the matrix pencil $(A,B)$ \cite{Gear_Petz84} is called the Kronecker index of the regular matrix pair $\{A,B\}$ which forms the matrix pencil $\lambda A+B$. Also, by the index of a pencil one can determine the index of the corresponding system of differential-algebraic equations \cite[p.~718]{Gear_Petz84}, in particular, the index of the pencil $(A,B)$ (the Kronecker index of the regular matrix pair~$\{A,B\}$) coincides with the index of the linear DAE (the Kronecker index of the regular DAE \cite[Def.~1.4]{Lamour-Marz-Tisch.}) $A\frac{d}{dt}x+Bx=g(t)$. This is analogous to the fact that the pencil $\lambda A+B$ corresponds to the linear part $\frac{d}{dt}[Ax]+Bx$ of the DAE \eqref{DAE} and the influence of the linear part is determined by the properties of the corresponding pencil.
For comparison with the notion of the ``tractability index'' from the works of R.~M\"arz, C.~Tischendorf and R.~Lamour \cite{Marz,Tischendorf,Lamour-Marz-Tisch.}, note that the linear DAE ${\frac{d}{dt}[Ax]+Bx=q(t)}$ with the regular pencil $\lambda A+B$ of index 1 (i.e., \eqref{index1} is fulfilled) is regular with tractability index~1 \cite[p.~65,~91, Def.~2.25]{Lamour-Marz-Tisch.}.

But if we consider a semilinear DAE $A\frac{d}{dt}x+Bx=f(t,x)$ (in this case the solution $x(t)$ must be smoother than the solution of the DAE \eqref{DAE}), then in order for it to have the index~1 for all $t\ge 0$,~$x\in D\subseteq \Rn$ (to be exact, $(t,x)\in L_0$, where $L_0$ is defined in \eqref{soglreg2}), it is necessary that the pencil $\left(A, B-\frac{\partial }{\partial x}f(t,x)\right)$ has the index~1 for all $t\ge 0$,~$x\in D$. This condition is too restrictive for my research  
and, besides, it does not allow to prove the existence of a unique global solution, since the uniqueness of the solution can be proved only locally (in \cite[Ch.~9]{Ascher-Petz} this is shown for a ``semi-explicit index-1'' DAE).
\end{remark}

One of the conditions which allow to prove the existence of a unique global solution of the Cauchy problem \eqref{DAE}, \eqref{ini} for any consistent initial value $x_0$ is the condition of the basis invertibility of an operator function (Definition~\ref{bas-invert}) which will be discussed below. To begin, we introduce the definitions.
\begin{definition}\label{add-res}
A system of one-dimensional projectors $\{\Theta _k\}_{k=1}^s$,\, $\Theta _k \colon Z\to Z$, such that $\Theta _i\, \Theta _j \hm= \Theta _j\, \Theta _i = \delta _{ij}\, \Theta _i$ ($\delta _{ij}$ is the Kronecker delta), and $E_Z =\sum\limits_{k=1}^s\Theta _k$ is called an \emph{additive resolution of the identity} in $s$-dimensional linear space $Z$.
\end{definition}

The additive resolution of the identity generates a direct
decomposition of $Z$  into the sum of $s$ one-dimensional subspaces: $Z = Z_1 \dot + Z_2\dot +\cdots \dot +Z_s$,\; $Z_k =  \Theta _k\, Z$.
\begin{definition}\label{bas-invert}
 Let $W$, $Z$ be $s$-dimensional linear spaces, $D\subset W$.  An operator function (a mapping) $\Phi \colon D\hm\to L(W,Z)$ is called  \textit{basis invertible} on the convex hull  $conv\{\Hat w, \Hat{\Hat w}\}$ of vectors $\Hat w,\, \Hat{\Hat w} \in D$ if for any set of vectors $\{w^k\}_{k=1}^s$, $w^k\in conv \{\Hat w, \Hat{\Hat w}\}$,  and some additive resolution of the identity $\{\Theta _k\}_{k=1}^s$ in the space $Z$ the operator
\begin{equation*}
 \Lambda \hm=\sum\limits_{k=1}^s \Theta _k \Phi (w^k) \hm\in L(W,Z)
\end{equation*}
has the inverse operator $\Lambda ^{-1} \in L(Z,W)$.
\end{definition}

Let us represent the operator $\Phi (w)\!\in\! L(W,Z)$ as the matrix relative to some bases in the $s$-dimensional spaces $W$,~$Z$:\,
\begin{equation*}
\Phi (w)= \begin{pmatrix} \Phi _{\scriptscriptstyle 11} (w) & \cdots & \Phi _{{\scriptscriptstyle 1} s} (w) \\ \cdots & \cdots & \cdots \\ \Phi _{s {\scriptscriptstyle 1}} (w) & \cdots & \Phi _{ss} (w) \end{pmatrix}.
\end{equation*}
\noindent Definition \ref{bas-invert} can be stated as follows: the matrix function  $\Phi $ is called \textit{basis invertible} on the convex hull $conv \{\Hat w, \Hat{\Hat w}\}$ of vectors $\Hat w,\, \Hat{\Hat w}\! \in\! D$ if for any set of vectors $\{w^k\}_{k=1}^s \hm\subset conv \{\Hat w,\, \Hat{\Hat w}\}$ the matrix\,
\begin{equation*}
\Lambda = \begin{pmatrix} \Phi _{\scriptscriptstyle 11}(w^{\scriptscriptstyle 1}) & \cdots & \Phi _{{\scriptscriptstyle 1} s} (w^{\scriptscriptstyle 1}) \\ \cdots & \cdots & \cdots \\  \Phi _{s {\scriptscriptstyle 1}} (w^s) & \cdots & \Phi _{ss} (w^s) \end{pmatrix}
\end{equation*}
has the inverse $\Lambda ^{-1}$. 

Note that the property of basis invertibility does not depend on the choice of a basis or an additive resolution of the identity in $Z$ (this follows directly from the Definitions~\ref{add-res},~\ref{bas-invert}).

Obviously, if the operator function $\Phi $ is basis invertible on $conv \{\Hat w, \Hat{\Hat w}\}$, then it is invertible at any point  $w^* \in conv \{\Hat w, \Hat{\Hat w}\}$ ($w^*=\alpha \Hat{\Hat w} + (1-\alpha)\Hat w$, $\alpha \in [0,1]$),
i.e., for each point $w^*\in conv \{\Hat w, \Hat{\Hat w}\}$ its image $\Phi (w^*)$ under the mapping $\Phi $ is an invertible continuous linear operator from $W$ to $Z$. The converse is not true unless the spaces $W$, $Z$ are one-dimensional. We give an example.
\begin{example}
 Let $W = Z = {\mathbb R}^2$, $D = conv \{\Hat w, \Hat{\Hat w}\}$, $\Hat w =(1,-1)^T$, $\Hat{\Hat w}=(1,1)^T$, $w =(a, b)^T \in D$,
\begin{equation*}
 \Phi (w) = \begin{pmatrix}  a\, b & 1 \\ -1 & a\, b \end{pmatrix}.
\end{equation*}
For the set of vectors $\{w^1, w^2\} \subset conv \{\Hat w ,\Hat{\Hat w}\}$, $w^1= (a_1, b_1)^T$, $w^2\hm= (a_2, b_2)^T$, the~operator $\Lambda$ has the form
\begin{equation*}
 \Lambda = \begin{pmatrix} a_1\, b_1 & 1 \\ -1 & a_2\, b_2 \end{pmatrix}.
\end{equation*}
Since $\det \Phi (w) \hm= a^2\, b^2+1 \ne 0$ for any $w \in D$, then $\Phi (w)$ is invertible on $D$. However the operator $\Lambda$ is not invertible for $\{w^1, w^2\} = \{\Hat w, \Hat{\Hat w}\}$ and hence the operator function $\Phi $  is not basis invertible on $D$. If we take $\Hat w =(1,0)^T$, then $\Phi $  will be basis invertible on~$D$.
\end{example}

Now we discuss why this definition is needed. As shown above, the DAE \eqref{DAE} is equivalent to the system of a purely differential equation and a purely algebraic equation. The algebraic equation defines one of the components of a DAE solution as an implicitly given function. With the help of the implicit function theorem this component can be defined as an (unique) explicitly given function, but only locally, i.e., in some sufficiently small neighborhood. But we need a unique globally defined explicit function for further application of the results on Lagrange stability to the differential equation, which will be obtained by substitution of the found component (function). For this purpose, the condition of \emph{the basis invertibility of an operator function} (Definition~\ref{bas-invert}), which was first introduced in \cite{RF1}, is used.  Note that this condition does not impose restrictions of a type of a global Lipschitz condition, including the condition of contractivity, and does not require the global boundedness of the norm for an inverse function on the whole domain of definition (see Remark~\ref{RemFuncPhi}).

In the theorem \cite[Thm.~6.7]{Lamour-Marz-Tisch.} conditions of the global solvability are given for the nonlinear DAE $f\big((D(t)x)',x,t\big)=0$\, \cite[(4.1)]{Lamour-Marz-Tisch.} that is a contractive regular index-1 DAE for all $Dx\in {\mathbb R}^n$, $x\in {\mathbb R}^m$, $t\in [0,\infty )$. The global condition of index~1 for the DAE that is present in this theorem means that for all $y\in {\mathbb R}^n$, $x\in {\mathbb R}^m$, $t\in [0,\infty )$ the pencil $\lambda f_y(y,x,t)D(t) + f_x(y,x,t)$ is regular with Kronecker index~1 \cite[p.~318-320]{Lamour-Marz-Tisch.} . Therefore, there must exist the constants $C_1,\, C_2 >0$ independent of $t$, $x$, $y$ and such that $\left\|(\lambda f_y(y,x,t)D(t) + f_x(y,x,t))^{-1}\right\| \le C_1$ for all $y\in {\mathbb R}^n$, $x\in {\mathbb R}^m$, $t\in [0,\infty )$, $|\lambda|\ge C_2$, i.e., the norm is globally bounded.
The requirement of the contractivity of the regular index-1 DAE (see \cite[Def.~6.1,~6.5]{Lamour-Marz-Tisch.}) imposes additional restrictions.
Taking into account Remark~\ref{RemFuncPhi}, in the case of a semilinear DAE \emph{these requirements are more restrictive than the conditions of global solvability from Theorem~\ref{Th_Ust1}}.

Concerning the theorems \cite[Thm.~2.1]{Marz}, \cite[Thm.~3.3]{Tischendorf}, note that they are obtained for the autonomous DAE. If we consider the nonautonomous DAEs, namely, $A\frac{d}{dt}x\hm+g(t,x)=0$ or $f(x',x,t)=0$, where $f(x',x,t)= Ax'+g(t,x)$, then, as said above, the requirement that the pencil $\lambda A + \frac{\partial }{\partial x}g(t,x^*)$ has index~1 means that there exist the constants $C_1, C_2\! >\!0$ independent of $t$ and such that $\left\|(\lambda A+\frac{\partial }{\partial x}g(t,x^*))^{-1}\right\|\! \le\! C_1$, $|\lambda|\!\ge\! C_2$ for all $t\!\in\! [0,\infty)$, i.e., the norm is globally bounded in $t$, hence, this requirement is more restrictive than the requirement that the operator function $\Phi$ is basis invertible and the pencil $\lambda A +B$ has index~1.

Also, note that in \cite[Thm.~2.1]{Marz} and \cite[Thm.~3.3]{Tischendorf} it is required that the nonlinear function is twice continuously differentiable, while in Theorem~\ref{Th_Ust1} it is only required that $f(t,x)$ is continuous and has the continuous partial derivative $\frac{\partial }{\partial x} f(t,x)$.

\vspace*{0.3cm}

 A solution $x(t)$ of the Cauchy problem \eqref{DAE}, \eqref{ini} is called \emph{global} if it exists on the whole interval $[t_0,\infty )$.
\begin{definition}\label{Def-FiniteTime}
 A solution $x(t)$ of the Cauchy problem \eqref{DAE}, \eqref{ini} has a  \emph{finite escape time} if it exists on some finite interval $[t_0,T)$ and is unbounded, i.e., there exists $T\!<\!\infty$ $(T\!>\!t_0)$ such that $\mathop{\lim }\limits_{t\to T-0} \| x(t)\| =+\infty $.\,
\end{definition}
If the solution has a finite escape time, it is called \emph{Lagrange unstable}.
\begin{definition}\label{Def-LStableSol}
  A solution $x(t)$ of the Cauchy problem \eqref{DAE}, \eqref{ini} is called \emph{Lagrange stable} if it is global and bounded, i.e., the solution $x(t)$ exists on $[t_0,\infty )$ and $\mathop{\sup }\limits_{t\in [t_0,\infty )}\! \| x(t)\|\!\hm<\! +\infty$.
\end{definition}
\begin{definition}\label{Def-EqLStUnst}
\emph{The equation \eqref{DAE} is Lagrange stable} if every solution of the Cauchy problem \eqref{DAE}, \eqref{ini} is Lagrange stable.

\emph{The equation \eqref{DAE} is Lagrange unstable} if every solution of the Cauchy problem \eqref{DAE}, \eqref{ini} is Lagrange unstable.
\end{definition}

 \section{Lagrange stability of the semilinear DAE}\label{SectThUst}

The theorem on the Lagrange stability of the DAE \eqref{DAE}, which gives sufficient conditions of the existence and uniqueness of global solutions of the Cauchy problem \eqref{DAE}, \eqref{ini}, where the initial points satisfy the consistency condition $(t_0,x_0)\in L_0$ (the manifold $L_0$ is defined in \eqref{soglreg2}), and gives conditions of the boundedness of the global solutions, is presented below.

\begin{theorem}\label{Th_Ust1}
Let $f\in C([0,\infty)\times \Rn, \Rn)$ have the continuous partial derivative $\frac{\partial }{\partial x} f(t,x)$ on $[0,\infty)\times \Rn$,\, $\lambda A+B$ be a regular pencil of index 1 and
\begin{equation}\label{soglreg2}
\forall\, t\!\ge\! 0\, \forall\, x_{p_1}\! \in\! X_1\, \exists\, x_{p_2}\! \in\! X_2 \colon\! (t,x_{p_1}\!+x_{p_2})\!\in\! L_0\! =\! \{(t,x)\! \in\!  [0,\infty) \times  \Rn  \mid  Q_2[Bx-f(t,x)]\! =\! 0\},\!\!
\end{equation}
where $X_1$, $X_2$ from \eqref{Proj.2}. Let for any 
$\Hat x_{p_2}, \Hat{\Hat x}_{p_2}\! \hm\in\! X_2$ such that $(t_*, x_{p_1}^*\! + \Hat{x}_{p_2})$,$(t_*, x_{p_1}^*\! + \Hat{\Hat x}_{p_2})\!\hm\in\! L_0$ the operator function 
\begin{equation}\label{funcPhi}
\Phi \colon X_2 \to L(X_2,Y_2),\quad \Phi (x_{p_2})=\left[\frac{\partial }{\partial x} \big(Q_2 f(t_*,x_{p_1}^* +x_{p_2})\big)-B\right] P_2,
\end{equation}
be basis invertible on the convex hull $conv\{\Hat x_{p_2},\, \Hat{\Hat x}_{p_2}\}$. Suppose for some self-adjoint positive operator $H \in L(X_1)$ and some number $R>0$ there exist functions $k\in C([0,\infty),{\mathbb R})$, $U\hm\in C((0,\infty), (0,\infty))$ such that
\begin{equation*}
\int\limits_c^{+\infty} \frac{dv}{U(v)} =+\infty\quad (c>0),
\end{equation*}

\vspace*{-0.4cm}

\begin{equation}\label{Lagr1}
\hspace*{-0.15cm}\!\!\! \left(HP_1 x,G^{-1}[-BP_1 x +Q_1 f(t,x)]\right)\!\le k(t)\, U\!\left({\textstyle \frac{1}{2}}(HP_1x,P_1x)\right)\;\, \forall\, (t,x)\!\in\! L_0\!:\!\| P_1 x\|\!\ge\!R.\!\!
\end{equation}
Then for each initial point $(t_0,x_0)\in L_0$ there exists a unique solution $x(t)$ of the Cauchy problem \eqref{DAE}, \eqref{ini} on $[t_0,\infty)$.

If, additionally,

\noindent{\centering $\int\limits_{t_0}^{+\infty} k(t)\, dt <+\infty$,

\vspace*{0.1cm}

}
\noindent there exists $\Tilde{x}_{p_2}\! \in\! X_2$ such that for any $\Tilde{\Tilde x}_{p_2}\!\in X_2$ such that $(t_*, x_{p_1}^*\! +\! \Tilde{\Tilde x}_{p_2})\!\in\! L_0$ the operator function \eqref{funcPhi} is basis invertible on $conv\{\Tilde x_{p_2}, \Tilde{\Tilde x}_{p_2}\}\setminus \{\Tilde{x}_{p_2}\}$, and
\begin{equation}\label{LagrA1}
\mathop{\sup }\limits_{t\in [0,\infty ),\: \| x_{p_1}\|\le M} \|Q_2 f(t,x_{p_1}+ \tilde{x}_{p_2})\| < +\infty,\qquad M<\infty \text{ is a constant,}
\end{equation}
then for the initial points $(t_0,x_0)\in L_0$ the equation \eqref{DAE} is Lagrange stable.
\end{theorem}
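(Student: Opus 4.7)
I would follow the standard splitting of \eqref{DAE} into the differential-algebraic system \eqref{prThreg1}, globally resolve the algebraic equation using the basis-invertibility hypothesis, and then apply a Lyapunov/Osgood-type comparison argument to the reduced ODE on $X_1$.

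\emph{Step 1 (global algebraic solvability).} I would first show that the algebraic constraint $Q_2[Bx - f(t,x)] = 0$ defines a single-valued $C^1$ map $\psi\colon [0,\infty) \times X_1 \to X_2$ with $(t, x_{p_1} + \psi(t, x_{p_1})) \in L_0$. Existence of at least one candidate is exactly the consistency condition \eqref{soglreg2}. For uniqueness, given two candidates $\hat v, \hat{\hat v} \in X_2$ at a fixed $(t_*, x_{p_1}^*)$, pick an additive resolution $\{\Theta_k\}_{k=1}^{\dim X_2}$ of the identity in $Y_2$ and apply the scalar mean value theorem componentwise to $h(s) = Q_2[Bv(s) - f(t_*, x_{p_1}^* + v(s))]$ along the segment $v(s) = \hat v + s(\hat{\hat v}-\hat v)$. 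Using \eqref{Proj.3} one checks $h'(s) = -\Phi(v(s))(\hat{\hat v}-\hat v)$; since $\Theta_k h(0) = \Theta_k h(1) = 0$, sampling $s_k \in (0,1)$ componentwise produces $\Lambda(\hat{\hat v}-\hat v) = 0$, where $\Lambda = \sum_k \Theta_k \Phi(v(s_k))$ is invertible by the basis-invertibility hypothesis on $\mathrm{conv}\{\hat v, \hat{\hat v}\}$. Hence $\hat v = \hat{\hat v}$, and the implicit function theorem delivers $C^1$ regularity of $\psi$.

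\emph{Step 2 (local to global existence).} Substituting $x = u + \psi(t, u)$ with $u = P_1 x$ reduces \eqref{prThreg1} to the ODE $\dot u = F(t, u) := G^{-1}[-Bu + Q_1 f(t, u + \psi(t, u))]$ on the finite-dimensional space $X_1$, with $u(t_0) = P_1 x_0$. Picard-Lindel\"of yields a unique maximal solution on $[t_0, T^*)$. Set $V(t) = \tfrac12 (H u(t), u(t))$; hypothesis \eqref{Lagr1} gives $\dot V \leq k(t) U(V)$ whenever $\|u(t)\| \geq R$. The Osgood-type divergence $\int^\infty dv/U(v) = +\infty$ and a standard comparison lemma then preclude finite-time blow-up of $V$ on any finite subinterval of $[t_0, T^*)$; continuity of $\psi$ in turn controls $\|\psi(t, u(t))\|$ on bounded sets, so $x$ cannot escape in finite time, forcing $T^* = \infty$. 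Setting $x(t) = u(t) + \psi(t, u(t))$ produces the unique global solution.

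\emph{Step 3 (boundedness).} Under $\int_{t_0}^\infty k(t)\,dt < +\infty$, the same comparison applied on the entire half-line gives $\sup_{t \geq t_0} V(t) < +\infty$, hence $\|P_1 x(t)\| \leq M$ for some $M$. To bound $P_2 x(t)$, I rerun the mean-value identity of Step~1 on the segment from $\tilde x_{p_2}$ (the reference point of the hypothesis) to $P_2 x(t)$; because $(t, P_1 x(t) + P_2 x(t)) \in L_0$ while $\tilde x_{p_2}$ need not lie on $L_0$, a one-sided identity emerges,
$$P_2 x(t) - \tilde x_{p_2} = \Lambda(t)^{-1}\bigl[B \tilde x_{p_2} - Q_2 f(t, P_1 x(t) + \tilde x_{p_2})\bigr],$$
where $\Lambda(t) = \sum_k \Theta_k \Phi(w^k(t))$ is invertible by the second basis-invertibility hypothesis applied on $\mathrm{conv}\{\tilde x_{p_2}, P_2 x(t)\} \setminus \{\tilde x_{p_2}\}$ (the sampling points $w^k(t)$ corresponding to $s_k \in (0,1)$ lie in this punctured segment), and the right-hand factor is bounded uniformly in $t$ by \eqref{LagrA1}. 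A uniform bound on $\|\Lambda(t)^{-1}\|$ then gives $\sup_{t \geq t_0} \|P_2 x(t)\| < +\infty$, and combining with the bound on $P_1 x$ yields Lagrange stability.

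\emph{Main obstacle.} The technical heart is the uniform-in-$t$ control of $\|\Lambda(t)^{-1}\|$ in Step~3: basis invertibility is a pointwise statement, not a uniform operator-norm bound. I would close a continuation loop — a priori bound $\|P_2 x(t) - \tilde x_{p_2}\|$ on a maximal subinterval where the sampling points $w^k(t)$ stay in a pre-compact subset of the punctured segment, exploit continuity of $\Phi$ on that set to bound $\|\Lambda(t)^{-1}\|$, feed this back through the identity, and rule out departure from the pre-compact set. Maintaining the hypothesis-required exclusion of $\tilde x_{p_2}$ from the relevant convex hull throughout this bootstrap is the delicate step.
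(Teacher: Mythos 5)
Your proposal takes essentially the same route as the paper: global resolution of the algebraic constraint via the componentwise mean-value argument plus basis invertibility and the implicit function theorem, reduction to an ODE on the $P_1$-component with the comparison/Osgood argument for global existence and for boundedness of $P_1x$, and the finite-increments identity anchored at $\tilde x_{p_2}$ combined with \eqref{LagrA1} to bound the $P_2$-component. The ``main obstacle'' you flag (a uniform-in-$t$ bound on $\|\Lambda(t)^{-1}\|$) is not treated in more detail in the paper either—it passes from invertibility of $\Lambda_2$ (a bounded operator on the finite-dimensional space) directly to a constant $N$ in the estimate—so your sketch is at least as careful on that point, and otherwise matches the paper's proof.
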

\begin{remark}\label{RemFuncPhi}
Now we explain the restriction which is imposed on $\Phi $ \eqref{funcPhi} (for the existence and uniqueness of global solutions). In the case when the space $X_2$ is one-dimensional (then the basis invertibility is equivalent to the invertibility), it is required that the continuous linear operator $\Lambda=\Phi (x^*_{p_2})$, $x^*_{p_2}\!\in\! conv\{\Hat x_{p_2},\, \Hat{\Hat x}_{p_2}\}$, has a continuous linear  inverse operator for any fixed $\Hat x_{p_2}$, $\Hat{\Hat x}_{p_2}$, $t^*$, $x^*_{p_1}$ such that $(t^*, x^*_{p_1}\! + \Hat x_{p_2}), (t^*, x^*_{p_1}\! + \Hat{\Hat x}_{p_2})\!\!\hm\in\! L_0$. In the case when the dimension of $X_2$ greater than 1, it is required that the operator $\Lambda \in L(X_2,Y_2)$, which is constructed from the operator function $\Phi$ (as shown in Definition~\ref{bas-invert}) for fixed $\Hat x_{p_2}$, $\Hat{\Hat x}_{p_2}$, $t^*$, $x^*_{p_1}$ such that $(t^*, x^*_{p_1}\! + \Hat x_{p_2}), (t^*, x^*_{p_1}\! \hm+ \Hat{\Hat x}_{p_2})\!\hm\in\! L_0$, is invertible.   At the same time, it does not require the global boundedness of the norm of the mapping $\left[\Phi \right]^{-1}$ on $X_2$ and does not require the global boundedness of the norm of the function $\left[\frac{\partial }{\partial x} \big(Q_2 f(t,x_{p_1} +x_{p_2})\big)P_2-BP_2\right]^{-1}$ on $[0,\infty)\times \Rn$ (i.e, it is not required that the norm of the function is bounded by a constant for all $t$,~$x_{p_1}$,~$x_{p_2}$). For comparison, the condition of index~1 for the DAE was discussed above.
\end{remark}
\begin{proof}
The DAE \eqref{DAE} is equivalent to the system \eqref{prThreg1} (as shown in Section~\ref{Prelim}). Denote $\dim X_1 =a$, $\dim X_2 =d$ $(d=n-a)$. Any  vector $x\in \Rn$ can be represented as $x =\left(\begin{array}{c} z \\ u \end{array}\right) \hm\in {\mathbb R}^a \times {\mathbb R}^d$, where $z\in {\mathbb R}^a$, $u \in {\mathbb R}^d$ are column vectors. We introduce the operators (the method of the construction of the operators is presented in \cite[Section~2]{RF1}) $P_a\colon {\mathbb R}^a \to X_1$,
$P_d\colon {\mathbb R}^d \to X_2$, which have the inverse operators  $P_a^{-1}\colon X_1 \to {\mathbb R}^a$, $P_d^{-1}\colon X_2 \to {\mathbb R}^d$. Then $z=P_a^{-1} P_1 x$, $u=P_d^{-1} P_2 x$, $x=P_a z+P_d u$ (recall that~\eqref{xdecomp}), and $P_a^{-1} P_1 P_a =E_{{\mathbb R}^a}$, $P_d^{-1} P_2 P_d =E_{{\mathbb R}^d}$. Multiplying the equations of the system~\eqref{prThreg1} by $P_a^{-1} $, $P_d^{-1}$ correspondingly and making the change $P_1 x=P_a z$, $P_2 x =P_d u$, we get the equivalent system
\begin{eqnarray}
& &\frac{d}{dt} z=P_a^{-1} G^{-1}\big[- B P_a z + Q_1 \tilde{f}(t,z,u)\big], \label{prThreg2} \\
& &P_d^{-1} G^{-1} Q_2 \tilde{f}(t,z,u)-u = 0,  \label{prThreg3}
\end{eqnarray}

\vspace*{-0.1cm}

\noindent where $\tilde{f}(t,z,u)=f(t,P_a z+P_d u)$.

Thus, the semilinear DAE \eqref{DAE} is equivalent to the system \eqref{prThreg2},~\eqref{prThreg3}.

\emph{I (Existence and uniqueness).} First we prove the first part of the theorem, that is, the existence and uniqueness of global solutions.

Consider the mapping

\vspace*{-0.2cm}

\begin{equation}\label{prThregF}
F(t,z,u)=P_d^{-1} G^{-1} Q_2 \tilde{f}(t,z,u)-u.
\end{equation}

\vspace*{-0.1cm}

\noindent It is continuous on $[0,\infty)\times {\mathbb R}^a \times {\mathbb R}^d$ and has continuous partial derivatives

\vspace*{-0.6cm}

\begin{equation*}
\begin{split}
\frac{\partial }{\partial z} F(t,z,u)&=P_d^{-1} G^{-1} \frac{\partial }{\partial x} (Q_2 f(t,x))P_a, \\
\frac{\partial }{\partial u} F(t,z,u)&=P_d^{-1} \left[G^{-1} \frac{\partial }{\partial x} (Q_2 f(t,x))-P_2 \right]P_d =P_d^{-1} G^{-1} \Phi (P_d u)P_d,
\end{split}
\end{equation*}

\vspace*{-0.2cm}

\noindent where $\Phi $ is the operator function \eqref{funcPhi}, $\Phi (P_d u)=\Phi (x_{p_2})$, $x_{p_2}=P_d u\in X_2$.

Let us prove that for any $\Hat u,\, \Hat{\Hat u}\! \in\! {\mathbb R}^d$ such that $(t_*,z_*,\Hat u)$, $(t_*,z_*,\Hat{\Hat u})\! \in\! \tilde{L}_0$, where

\vspace*{-0.1cm}

\begin{equation}\label{prThregL0}
  \tilde{L}_0 =\left\{(t,z,u)\in [0,\infty)\times {\mathbb R}^a \times {\mathbb R}^d\; \big|\; P_d^{-1} G^{-1} Q_2 \tilde{f} (t,z,u)-u=0\right\},
\end{equation}

\vspace*{-0.1cm}

\noindent the operator function $\Psi \colon {\mathbb R}^d \to L({\mathbb R}^d)$,  $\Psi (u)\hm=\frac{\partial }{\partial u} F(t_*,z_*,u)$, is basis invertible on \linebreak
$conv\{\Hat u, \Hat{\Hat u}\}$. Since \eqref{funcPhi} is basis invertible on $conv\{\Hat x_{p_2}, \Hat{\Hat x}_{p_2}\}$ for any ${\Hat x_{p_2},\, \Hat{\Hat x}_{p_2} \in X_2}$  such that $(t_*, x_{p_1}^*\! + \Hat{x}_{p_2})$, $(t_*, x_{p_1}^*\! + \Hat{\Hat x}_{p_2})\!\hm\in\! L_0$, there exists an additive resolution of the identity $\{\Theta _k \}_{k=1}^d $ in $Y_2$ such that the operator $\Lambda _1 =\sum\limits_{k=1}^d \Theta _k \Phi (x_{p_2}^k) \in L(X_2,Y_2)$  is invertible for any set of vectors $\{x_{p_2}^k\}_{k=1}^d \subset conv\{\Hat x_{p_2}, \Hat{\Hat x}_{p_2}\}$. With the help of the invertible operator $N=P_d^{-1} G^{-1} :Y_2 \to {\mathbb R}^d$ we introduce the system of one-dimensional projectors $\hat{\Theta }_k =N\Theta _k N^{-1} $, which form the additive resolution of the identity $\{\hat{\Theta }_k \}_{k=1}^d$ in ${\mathbb R}^d$. Take any $\Hat u,\, \Hat{\Hat u}\! \in\! {\mathbb R}^d$ such that $(t_*,z_*,\Hat{u})$, $(t_*,z_*,\Hat{\Hat u})\in \tilde{L}_0$
and any $u^k\! \in\! conv\{\Hat u, \Hat{\Hat u}\}$, $k=\overline{1,d}$. Taking into account that $(t,z,u)\!\in\! \tilde{L}_0 \hm\Leftrightarrow (t,x_{p_1}+x_{p_2})\hm\!\in\! L_0$ and for $\Hat x_{p_2}=P_d \Hat u$, $\Hat{\Hat x}_{p_2} =P_d \Hat{\Hat u}$, $x_{p_2}^k =P_d u^k$, $x_{p_1}^*=P_a z_*$  the operator $\Lambda_1$ is invertible, the operator

\vspace*{-0.2cm}

\[
\Lambda_2 =\sum\limits_{k=1}^d\hat{\Theta }_k \frac{\partial }{\partial u} F(t_*,z_*,u^k) =\sum\limits_{k=1}^d\hat{\Theta }_k P_d^{-1} G^{-1} \Phi (P_d u^k) P_d  =N\Lambda _1 P_d
\]

\vspace*{-0.1cm}

\noindent acting in ${\mathbb R}^d$ is also invertible. Hence, $\Psi $ is basis invertible on $conv\{\Hat u,\! \Hat{\Hat u}\}$\!.

Let $(t_*, z_*)$ be an arbitrary (fixed) point of $[0,\infty)\times {\mathbb R}^a$. Choose $u_* \!\in\!  {\mathbb R}^d$ so that $(t_*,z_*,u_*)\!\hm\in\! \tilde{L}_0$, this is possible by virtue of the condition \eqref{soglreg2}. From the basis invertibility of $\Psi $, it follows that there exists the continuous linear inverse operator \linebreak
$\left[\frac{\partial }{\partial u} F(t_*,z_*,u_*)\right]^{-1}$. By the implicit function theorems  \cite{Schwartz1}, there exist neighborhoods $U_\delta (t_*,z_*)\! \hm=\! U_{\delta _1}(t_*)\times U_{\delta _2 }(z_*)$ (if $t_*\!=0$, then $U_{\delta _1 }\!(t_*)\!=[0,{\delta _1})$), $U_{\varepsilon }(u_*)$ and a unique function $u =u(t,z)\!\hm\in\! C(U_\delta (t_*,z_*), U_{\varepsilon } (u_*))$, being continuously differentiable in $z$, such that $F(t,z,u(t,z))=0$, $(t,z)\!\in\! U_\delta (t_*,z_*)$, and $u(t_*,z_*)=u_*$. We define a global function $u = \eta (t,z)\colon [0,\infty )\times {\mathbb R}^a \to {\mathbb R}^d$ at the point $(t_*,z_*)$ as $\eta (t_*,z_*)= u(t_*,z_*)$.

Let us prove that
\begin{equation}\label{prThreg4}
\forall (t,z)\in [0,\infty )\times {\mathbb R}^a\, \exists !\, u\in {\mathbb R}^d : (t,z,u)\in \tilde{L}_0.
\end{equation}
Consider arbitrary (fixed) points $(t_*,z_*,\Hat u)$, $(t_*,z_*,\Hat{\Hat u})\!\in\! \tilde{L}_0$, clearly, $F(t_*,z_*,\Hat u)=0$, \linebreak $F(t_*,z_*,\Hat{\Hat u})\hm=0$. The projections $F_k(t,z,u)\hm=\hat{\Theta }_k F(t,z,u)$, $k=\overline{1,d}$, are functions with values in the one-dimensional spaces $R_k =\hat{\Theta }_k {\mathbb R}^d$ being isomorphic to ${\mathbb R}$. According to the formula of finite increments \cite{Schwartz1}, $F_k (t_*,z_*,\Hat{\Hat u})\hm-F_k(t_*,z_*,\Hat u) \hm= \frac{\partial }{\partial u} F_k(t_*,z_*,u^k)(\Hat{\Hat u} -\Hat u)=0$, $u^k \in conv\{\Hat u, \Hat{\Hat u}\}$, $k = \overline{1,d}$. Hence $\hat{\Theta }_k \frac{\partial }{\partial u} F(t_*,z_*,u^k)(\Hat{\Hat u} -\Hat u) = 0$, $k = \overline{1,d}$, from which, summing these expressions over $k$, we obtain that $\Lambda _2 (\Hat{\Hat u} -\Hat u)=0$, where the operator $\Lambda_2 \hm=\sum\limits_{k=1}^d\hat{\Theta }_k \frac{\partial }{\partial u} F(t_*,z_*,u^k)\hm=\sum\limits_{k=1}^d\hat{\Theta }_k \Psi (u^k)$ is invertible by virtue of the basis invertibility of $\Psi $ (see above). Consequently,  $\Hat{\Hat u} = \Hat u$.

It is proved that \eqref{prThreg4} and in some neighborhood of each point  $(t_*,z_*)\in [0,\infty )\times {\mathbb R}^a$ there exists a unique solution $u=\nu (t,z)$ of \eqref{prThreg3}, which is continuous in $(t, z)$ and is continuously differentiable in $z$.  So the function $u=\eta (t,z)$ coincides with $\nu (t,z)$ in this neighborhood and it is a solution of \eqref{prThreg3} with the corresponding smoothness properties. Let us show that the function $u=\eta (t,z)$ is unique on the whole domain of definition. Indeed, if there exists a function $u=\mu (t,z)$ having the same properties as $u=\eta (t,z)$ at some point $(t_*,z_*)\in [0,\infty )\times {\mathbb R}^a$, then by \eqref{prThreg4} $\eta (t_*,z_*)=\mu (t_*,z_*)=u_*$. Therefore,  $\eta (t,z)=\mu (t,z)$ on $[0,\infty )\times {\mathbb R}^a$.

Substituting the function $u=\eta (t,z)$ in \eqref{prThreg2} and denoting $g(t,z)= Q_1 \tilde{f}(t,z,\eta (t,z))$, we get:
\begin{equation}\label{prUstreg2}
\frac{d}{dt} z=P_a^{-1} G^{-1}[- BP_a z + g(t,z)].
\end{equation}

By the properties of $\eta $, $Q_1 \tilde{f}$, the function $g(t,z)$  is continuous in $(t,z)$ and is continuously differentiable in $z$ on $[0,\infty )\times {\mathbb R}^a$. Hence, for each initial point $(t_0, z_0)$ such that $(t_0,z_0,\eta (t_0,z_0))\in \tilde{L}_0$ there exists a unique solution $z(t)$ of the Cauchy problem for the equation \eqref{prUstreg2} on some interval $[t_0,\varepsilon )$ with the initial condition $z(t_0)=z_0$. Note that if $(t_0,x_0)\in L_0$ and $x_0 =P_a z_0 +P_d \eta (t_0 ,z_0)$, then $(t_0, z_0, \eta (t_0, z_0))\in \tilde{L}_0$.

Introduce the function $V(P_1 x)=\frac{1}{2}(HP_1 x ,P_1 x)=\frac{1}{2} (HP_a z,P_a z)= \frac{1}{2} \left(P_a^* HP_a z,z\right) 
\hm=\frac{1}{2} (\hat{H}z,z)=\hat{V}(z)$, where $\hat{H}=P_a^* HP_a$ and $H$ is an operator from \eqref{Lagr1}. Then $\grad \hat{V}(z)\hm=\hat{H}z$, where $\grad \,\hat{V}$ is the gradient of the function $\hat{V}$. Since $\big( HP_a z, \linebreak
G^{-1}[-BP_a z + Q_1 f(t,P_a z \hm+ P_d \eta (t,z))]\big) \hm= \big(\hat{H}z,P_a^{-1} G^{-1} [-BP_a z \hm+ g(t,z)]\big)$, then according to \eqref{Lagr1} there exists $\hat{R}>0$  such that
\begin{equation}\label{L1.1}
 (\hat{H}z,P_a^{-1} G^{-1}[-BP_a z + g(t,z)])\le k(t)\, U(\hat{V})\quad \forall\, t \ge 0,\: \| z\| \ge \hat{R},
\end{equation}
where $k\in C([0,\infty),{\mathbb R})$, and $U \in C((0,\infty), (0,\infty))$ such that

\vspace*{0.1cm}

\noindent{\centering ${\int\limits_c^{+\infty}\frac{dv}{U(v)}=+\infty}$.

\vspace*{0.1cm}

}
Taking into account \eqref{L1.1}, for all $t\ge 0$ and all $z$ such that $\| z\|\ge \hat{R}$ the derivative $\left. \dot{\hat{V}}\right|_{\eqref{prUstreg2}}$ of the function $\hat{V}$ along the trajectories of \eqref{prUstreg2} (see the definition in \cite[Ch.~2]{LaSal-Lef-eng}) satisfies the following estimate:
\[
\left. \dot{\hat{V}}\right|_{\eqref{prUstreg2}} =(\hat{H}z,P_a^{-1} G^{-1}[-BP_a z + g(t,z)])\le  k(t)\, U(\hat{V}).
\]
It follows from the properties of the functions $k$, $U$ that the inequality $\dot{v}\le k(t)U(v)$, $t\ge 0$, has no positive solution $v(t)$ with finite escape time (see \cite[Ch.~4]{LaSal-Lef-eng}). Then by~\cite[Ch.~4, Theorem~XIII]{LaSal-Lef-eng} every solution $z(t)$ of \eqref{prUstreg2} is defined in the future (i.e., the solution is defined on $[t_0,\infty )$). Consequently, the function $x(t) =P_a z(t)+P_d \eta (t,z(t))$ is a solution of the Cauchy problem \eqref{DAE}, \eqref{ini} on $[t_0,\infty)$.

Let us verify the \emph{uniqueness} of the global solution. It follows from what has been proved that the global solution $x(t)$ is unique on some interval $[t_0,\varepsilon )$. Assume that the solution is not unique on $[t_0,\infty )$. Then there exist $t_* \ge \varepsilon $ and two different global solutions $x(t)$, $\tilde{x}(t)$ with the common value $x_*\!=x(t_*)\!=\tilde{x}(t_*)$.
Let us take the point $(t_* ,x_*)$ as an initial point, then there must be a unique solution of \eqref{DAE} on some interval $[t_*,\varepsilon _1)$ with the initial value $x(t_*)=x_*$, which contradicts the assumption.

\emph{II (Boundedness).} Prove the second part of the theorem, that is, the Lagrange stability of the DAE. Suppose that the additional conditions of the theorem are satisfied.

Since $\int\limits_{t_0}^{+\infty} k(t)\, dt <+\infty$, the inequality $\dot{v}\le k(t)U(v)$, $t\ge 0$, has no unbounded positive solution for $t\ge 0$ (see \cite[Ch.~4]{LaSal-Lef-eng}). Then by \cite[Ch.~4, Theorem~XV]{LaSal-Lef-eng} the equation \eqref{prUstreg2} is Lagrange stable. Hence, $\mathop{\sup }\limits_{t\in [t_0,\infty )} \| z(t)\|<+\infty$, i.e.,
\begin{equation}\label{L1.2}
 \exists\, M_* <\infty :\|z(t)\|\le M_*\; \forall\, t \in [t_0,\infty).
\end{equation}

Taking into account the properties of $\Phi $ \eqref{funcPhi} and the connection between $\Phi $ and the operator function $\Psi \colon {\mathbb R}^d \to L({\mathbb R}^d)$ introduced in item I of the proof, we get that there exists a point $\tilde{u} \in {\mathbb R}^d$ ($\tilde{u} = P_d^{-1} \tilde{x}_{p_2}$) such that for any $\Tilde{\Tilde u} \in {\mathbb R}^d$ such that $(t_*, z_*,\Tilde{\Tilde u})\! \in\! \tilde{L}_0$ the operator function $\Psi$ is basis invertible on $conv\{\Tilde{u},\, \Tilde{\Tilde u}\}\setminus \{\tilde{u}\}$.  Let $(t_*, z_*,\Tilde{\Tilde u})\! \in\! \tilde{L}_0$ be an arbitrary (fixed) point and $\tilde{u}$ be a point with the property indicated above. Then using the formula of finite increments for $F_k(t_*,z_*,\Tilde{\Tilde u})$ and $F_k(t_*,z_*,\tilde{u})$, where $F_k(t,z,u)=\hat{\Theta }_k F(t,z,u)$, $F$ is the mapping \eqref{prThregF} and $\{\hat{\Theta }_k\}_{k=1}^d$ is an additive resolution of the identity in ${\mathbb R}^d$, and summing the obtained equalities over $k$ we get that $F(t_*,z_*,\Tilde{\Tilde u})- F(t_*,z_*,\tilde{u})= \Lambda _2 (\Tilde{\Tilde u}-\tilde{u})$, where $\Lambda_2 =\sum\limits_{k=1}^d\hat{\Theta }_k \Psi (u^k)$,\, $\Psi (u^k)=  \frac{\partial }{\partial u} F(t_*,z_*,u^k)$, $u^k \in conv\{\tilde{u},\, \Tilde{\Tilde u}\}\setminus \{\tilde{u}\}$, i.e., $u^k=\alpha \Tilde{\Tilde u} + (1-\alpha) \tilde{u}$, $\alpha \in (0,1]$, $k = \overline{1,d}$.
It follows from the basis invertibility of $\Psi$ on $conv\{\tilde{u},\, \Tilde{\Tilde u}\}\setminus \{\tilde{u}\}$ that there exists the inverse operator $\Lambda_2^{-1} \in L({\mathbb R}^d)$.  Taking into consideration the above facts and the fact that $F(t_*,z_*,\Tilde{\Tilde u})=0$ we get $\Tilde{\Tilde u} = \tilde{u} \hm- \Lambda_2^{-1}[P_d^{-1} G^{-1} Q_2 \tilde{f}(t_*,z_*,\tilde{u}) - \tilde{u}]$. This is fulfilled for an arbitrary point $(t_*, z_*,\Tilde{\Tilde u})\! \in\! \tilde{L}_0$. Consequently, for each $t_*\! \in\! [t_0,\infty)$ the equality $\eta (t_*,z(t_*))\!= \tilde{u} \hm- \Lambda_2^{-1}[P_d^{-1} G^{-1} Q_2 \tilde{f}(t_*,z(t_*),\tilde{u}) \hm- \tilde{u}]$, where $z(t)$ and $\eta (t,z(t))$ are components of the global solution $x(t) =P_a z(t)\hm+P_d \eta (t,z(t))$ of the Cauchy problem \eqref{DAE}, \eqref{ini},  holds.  Denote $\tilde{M}= \|\tilde{u}\|$. Taking into account that $\Lambda_2^{-1}$ is a bounded linear operator (since $\Lambda_2^{-1}\in L({\mathbb R}^d)$), there exists a constant  ${N\ge0}$ such that $\|\eta (t_*,z(t_*))\| \hm\le (1+N)\tilde{M} \hm+ N \|P_d^{-1}G^{-1}\|\, \|Q_2 \tilde{f}(t_*,z(t_*),\tilde{u})\|$ for each $t_* \in [t_0,\infty)$. Then it follows from \eqref{L1.2}, \eqref{LagrA1} that there exists a constant $C\ge 0$ such that $\|\eta (t_*,z(t_*))\| \hm\le C$ for each $t_* \in [t_0,\infty)$.

Since the estimate $\|x(t)\| = \|P_a z(t) \hm+ P_d \eta (t,z(t))\|\le \|P_a \|M_*+ \|P_d \| C$ is fulfilled for all $t \in [t_0,\infty)$, the solution $x(t)$ of the Cauchy problem \eqref{DAE}, \eqref{ini} is Lagrange stable. This holds for each initial point $(t_0,x_0)\in L_0$. Hence for the initial points $(t_0,x_0)\in L_0$ the equation \eqref{DAE} is Lagrange stable.

The theorem is proven.
\end{proof}
\begin{remark}\label{RemConsistIni}\!\!
 \emph{The consistency condition} $(t_0,x_0)\!\in\! L_0$ for the initial point $(t_0,x_0)$ is one of the necessary conditions for the existence of a solution of the Cauchy problem~\eqref{DAE},~\eqref{ini}.
\end{remark}
\begin{remark}
If $\Phi$ \eqref{funcPhi} is basis invertible on $conv\{\Hat x_{p_2}, \Hat{\Hat x}_{p_2}\}$ for any $\Hat x_{p_2},\, \Hat{\Hat x}_{p_2}\!\hm\in\! X_2$, $t_*\!\in\! [0,\infty)$, $x^*_{p_1}\!\in\! X_1$,
then obviously it is basis invertible on $conv\{\Hat x_{p_2}, \Hat{\Hat x}_{p_2}\}$ for any $\Hat{x}_{p_2},\, \Hat{\Hat x}_{p_2}$ such that $(t_*, x_{p_1}^*\! + \Hat{x}_{p_2}),\, (t_*, x_{p_1}^*\! + \Hat{\Hat x}_{p_2})\!\hm\in\! L_0$ and on $conv\{\tilde{x}_{p_2}, \Tilde{\Tilde x}_{p_2}\}\setminus \{\tilde{x}_{p_2}\}$ for any $\tilde{x}_{p_2}$ and any $\Tilde{\Tilde x}_{p_2}$ such that $(t_*, x_{p_1}^* + \Tilde{\Tilde x}_{p_2})\!\in\! L_0$.  It is clear that this requirement is stronger, however, its verification can be more convenient for applications.
\end{remark}

\vspace*{-0.2cm}

 \section{Lagrange instability of the semilinear DAE}\label{SectThNeust}

\vspace*{-0.1cm}

The theorem on the Lagrange instability of the DAE \eqref{DAE}, which gives sufficient conditions of the existence and uniqueness of solutions with a finite escape time for the Cauchy problem \eqref{DAE}, \eqref{ini}, where the initial points $(t_0,x_0)$ satisfy the consistency condition $(t_0,x_0)\in L_0$ and the corresponding components $P_1 x_0$ belong to a certain region $\Omega$, is presented below.
\begin{theorem}\label{Th_Neust1}
Let $f\in C([0,\infty)\times \Rn, \Rn)$ have the continuous partial derivative $\frac{\partial }{\partial x} f(t,x)$ on $[0,\infty)\times \Rn$,\, $\lambda A+B$ be a regular pencil of index 1 and \eqref{soglreg2} be fulfilled. Let for any $\Hat{x}_{p_2}, \Hat{\Hat x}_{p_2}\! \hm\in\! X_2$ such that $(t_*, x_{p_1}^*\! + \Hat{x}_{p_2})$, $(t_*, x_{p_1}^*\! + \Hat{\Hat x}_{p_2})\!\hm\in\! L_0$ the operator function  \eqref{funcPhi} be basis invertible on $conv\{\Hat x_{p_2},\, \Hat{\Hat x}_{p_2}\}$. Further, let there exist a region $\Omega \subset X_1$ such that $P_1x=0 \not\in \Omega$ and the component $P_1x(t)$ of each existing solution $x(t)$ with the initial point $(t_0,x_0)\in L_0$, where $P_1 x_0 \in \Omega$, remains all the time in $\Omega$. Suppose for some self-adjoint positive operator ${H\in L(X_1)}$ there exist functions $k\in C([0,\infty),{\mathbb R})$, $U\in C((0,\infty), (0,\infty))$ such that

\vspace*{-0.3cm}

\begin{equation*}
\int\limits_c^{+\infty} \frac{dv}{U(v)} <+\infty\quad (c>0),\qquad \int\limits_{t_0}^{+\infty} k(t)\, dt =\infty,
\end{equation*}

\vspace*{-0.4cm}

\begin{equation}\label{Lagr2}
\hspace*{-0.15cm}\! (HP_1 x,G^{-1}[-BP_1 x +Q_1 f(t,x)])\! \ge k(t)\, U\!\left({\textstyle \frac{1}{2}}(HP_1x,P_1x)\right)\;\: \forall\, (t,x)\in L_0 : P_1 x \in \Omega.\!
\end{equation}

Then for each initial point $(t_0,x_0)\in L_0$, where $P_1 x_0 \in \Omega$, there exists a unique solution of the Cauchy problem \eqref{DAE}, \eqref{ini} and this solution has a finite escape time.
\end{theorem}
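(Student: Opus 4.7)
The plan is to mirror the structure of the proof of Theorem~\ref{Th_Ust1} and replace the boundedness conclusion by a finite-escape-time conclusion via a reverse comparison argument. First, I would apply the same reduction: rewrite~\eqref{DAE} as the equivalent system \eqref{prThreg2},~\eqref{prThreg3} via the projectors $P_a$, $P_d$, transfer the basis invertibility of $\Phi$~\eqref{funcPhi} through the invertible operator $N=P_d^{-1}G^{-1}$ to the basis invertibility of $\Psi(u)=\frac{\partial}{\partial u}F(t,z,u)$, and, exactly as in part~I of the proof of Theorem~\ref{Th_Ust1}, apply the implicit function theorem together with the basis-invertibility argument to construct a unique globally defined function $u=\eta(t,z)$ on $[0,\infty)\times{\mathbb R}^a$ solving \eqref{prThreg3}. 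Substituting into \eqref{prThreg2} yields the reduced ODE
\[
\tfrac{d}{dt}z=P_a^{-1}G^{-1}\bigl[-BP_a z+g(t,z)\bigr],\qquad g(t,z)=Q_1\tilde{f}(t,z,\eta(t,z)),
\]
with $g$ continuous in $(t,z)$ and continuously differentiable in $z$, so for every consistent initial point $(t_0,x_0)\in L_0$ with $P_1 x_0\in\Omega$ there is a unique local solution. This part does not use the sign of the growth and carries over verbatim.

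Next I would take $\hat{V}(z)=\tfrac{1}{2}(\hat{H}z,z)$ with $\hat{H}=P_a^{*}HP_a$ (self-adjoint and positive on ${\mathbb R}^a$), as in Theorem~\ref{Th_Ust1}. Since by hypothesis $P_1 x(t)=P_a z(t)$ stays in $\Omega$ along the maximal solution and $(t,x(t))\in L_0$ for every such $t$, the inequality \eqref{Lagr2} translates into
\[
\left.\dot{\hat{V}}\right|_{z(t)}=\bigl(\hat{H}z,P_a^{-1}G^{-1}[-BP_a z+g(t,z)]\bigr)\ge k(t)\,U\bigl(\hat{V}(z(t))\bigr)
\]
throughout the maximal interval of existence of $z(t)$. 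Moreover $0\notin\Omega$ together with $P_1 x_0\in\Omega$ gives $P_a z_0\ne 0$, so $\hat{V}(z_0)>0$ by positivity of $\hat{H}$.

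Finally, since $\int_c^{+\infty}dv/U(v)<+\infty$ and $\int_{t_0}^{+\infty}k(t)\,dt=\infty$, every positive solution of the scalar problem $\dot{v}=k(t)U(v)$, $v(t_0)=\hat{V}(z_0)$, blows up at some finite $T>t_0$; this is the finite-escape-time counterpart of the LaSalle--Lefschetz results already used in Theorem~\ref{Th_Ust1} (see \cite[Ch.~4]{LaSal-Lef-eng}). A standard scalar comparison then forces $\hat{V}(z(t))\to+\infty$ as $t\to T-0$; positivity of $\hat{H}$ gives $\|z(t)\|\to\infty$, and the estimate $\|x(t)\|\ge\|P_1 x(t)\|/\|P_1\|=\|P_a z(t)\|/\|P_1\|$ propagates this to $\|x(t)\|\to+\infty$. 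Uniqueness on $[t_0,T)$ follows from local uniqueness by the same continuation/contradiction argument as in part~I of the proof of Theorem~\ref{Th_Ust1}.

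The main subtlety I anticipate is legitimizing the scalar comparison on the \emph{maximal} interval of existence: one must exclude the scenario in which the reduced trajectory $z(t)$ leaves the applicability region of \eqref{Lagr2} before $\hat{V}$ has had time to blow up, as well as the scenario in which $\hat{V}(z(t))$ blows up without $\|z(t)\|$ doing so. The first is secured by the standing hypothesis that $P_1 x(t)$ remains in $\Omega$ throughout the maximal interval, and the second by positivity (hence coercivity) of $\hat{H}$ on $X_1$. With these two points recorded, the whole argument becomes a direct dual of the stability proof.
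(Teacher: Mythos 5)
Your proposal follows essentially the same route as the paper's proof: the identical reduction to the system \eqref{prThreg2},~\eqref{prThreg3} with the globally defined $u=\eta(t,z)$, the same function $\hat{V}(z)=\frac{1}{2}(\hat{H}z,z)$ with $\hat{H}=P_a^{*}HP_a$ and the differential inequality $\dot{\hat V}\ge k(t)U(\hat V)$ on $\hat\Omega=P_a^{-1}\Omega$, and the same uniqueness argument; the only difference is that the paper concludes the finite escape time of $z(t)$ by directly invoking \cite[Ch.~4, Thm.~XIV]{LaSal-Lef-eng}, whereas you sketch the underlying scalar comparison yourself. This is a presentational rather than substantive difference, so the proposal is correct and matches the paper's approach.
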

\begin{proof}
The beginning of the proof of Theorem~\ref{Th_Neust1} coincides with the proof of Theorem~\ref{Th_Ust1} up to the following statement. For each initial point $(t_0,z_0)$ such that $(t_0,z_0,\eta (t_0,z_0)) \in  \tilde{L}_0$ there exists a unique solution $z(t)$ of the Cauchy problem for the equation \eqref{prUstreg2} on some interval $[t_0,\varepsilon )$ with the initial condition $z(t_0)=z_0$. Hence, for each initial point $(t_0,x_0)\in L_0$, where $x_0 =P_a z_0 +P_d \eta (t_0 ,z_0)$, there exists a unique solution $x(t) =P_a z(t)\hm+P_d \eta (t,z(t))$  of the Cauchy problem \eqref{DAE}, \eqref{ini} on $[t_0,\varepsilon )$.

Further we make the following changes.

By the condition of Theorem~\ref{Th_Neust1} there exists a region $\Omega \subset X_1$ such that $P_1x=0 \not\in \Omega $ and the component $P_1x(t)$ of each solution $x(t)$ with the initial point $(t_0,x_0)\in L_0$, where $P_1 x_0 \in \Omega$, remains all the time in $\Omega$. Taking into account that $P_1 x=P_a z$, each solution $z(t)$ of the equation \eqref{prUstreg2} starting in the region $\hat{\Omega}=\{z\in \Ra \mid P_a z \in \Omega\}=P_a^{-1} \Omega$ remains all the time in it and $z=0 \not\in \hat{\Omega}$. Introduce the function $\hat{V}(z)=\frac{1}{2} (\hat{H}z,z)$, where  $\hat{H}=P_a^* H P_a$ and $H$ is an operator from \eqref{Lagr2}. Clearly, the function $\hat{V}(z)$ is positive for all $z\in \hat{\Omega}$. It follows from \eqref{Lagr2} that
\begin{equation}\label{L2}
 (\hat{H}z,P_a^{-1} G^{-1}[-BP_a z + g(t,z)])\ge k(t)\, U(\hat{V})\quad \forall\, t \ge 0,\: z \in \hat{\Omega},
\end{equation}
where $k\in C([0,\infty), {\mathbb R})$, $U\in C((0,\infty), (0,\infty))$  such that

\noindent{\centering$\int\limits_c^{+\infty} \frac{dv}{U(v)} <+\infty$, $\int\limits_{t_0}^{+\infty} k(t)\, dt \hm=\infty$.

}
Taking into account \eqref{L2}, for all $t\ge 0$ and all $z \in \hat{\Omega}$ the derivative of $\hat{V}$ along the trajectories of \eqref{prUstreg2} satisfies the following estimate:
\[
\left. \dot{\hat{V}}\right|_{\eqref{prUstreg2}} =(\hat{H}z,P_a^{-1} G^{-1}[-BP_a z + g(t,z)])\ge  k(t)\, U(\hat{V}).
\]
It follows from the properties of the functions $k$, $U$ that the inequality $\dot{v}\ge k(t)U(v)$, $t\ge 0$, has no positive solution defined in the future (see \cite[Ch.~4]{LaSal-Lef-eng}).  Then by \cite[Ch.~4, Thm.~XIV]{LaSal-Lef-eng} each solution $z(t)$ of \eqref{prUstreg2} satisfying the condition $z(t_0)=z_0$, where $z_0\in \hat{\Omega}$ and $(t_0,z_0,\eta (t_0,z_0))\hm\in \tilde{L}_0$, has a finite escape time, i.e., it exists on some finite interval $[t_0, T)$ and $\mathop{\lim }\limits_{t\to T-0}\| z(t)\| \hm= +\infty $. Then each function $x(t) =P_a z(t)+P_d \eta (t,z(t))$ with the corresponding initial values $(t_0,x_0)$, where $x_0 =P_a z_0+P_d \eta (t_0,z_0)$, is a solution of the Cauchy problem \eqref{DAE}, \eqref{ini} with finite escape time, i.e., the solution $x(t)$ is defined on the corresponding finite interval $[t_0, T)$ and $\mathop{\lim }\limits_{t\to T-0} \| x(t)\| = +\infty $.

Let us verify the \emph{uniqueness} of the solution $x(t)$, $t\in [t_0, T)$. It follows from what has been proved that the solution $x(t)$ is unique on some interval $[t_0,\varepsilon)$. Assume that the solution is not unique on $[t_0,T)$. Then there exist $t_* \in [\varepsilon ,T)$ and two different solutions $x(t)$, $\tilde{x}(t)$ with the common value $x_* =x(t_*)=\tilde{x}(t_*)$ such that $(t_*,x_*)\in L_0$ and $P_1 x_* \in \Omega$. Let us take the point $(t_* ,x_*)$ as an initial point, then there must be a unique solution of \eqref{DAE} on some interval $[t_*, \varepsilon _1)\subset [t_0,T)$ with the initial value $x(t_*)=x_*$, which contradicts the assumption.

The theorem is proven.
\end{proof}

 \section{Lagrange stability of the  mathematical model of a radio engineering filter}\label{MatModUst}

Let us consider the electrical circuit of a radio engineering filter represented in Fig.~\ref{MatMod1}. A voltage source $e$, nonlinear resistances $\varphi$, $\varphi _0$, $\psi$, a nonlinear conductance $h$, a linear resistance $r$, a linear conductance $g$, an inductance $L$ and a capacitance~$C$ are given.
\begin{figure}[H]
\centering\footnotesize
\includegraphics[width=5.6cm]{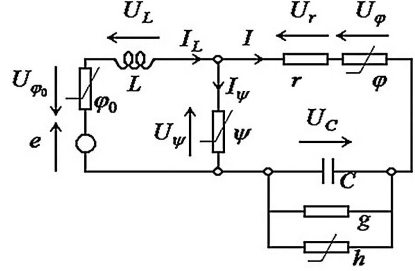}
\caption{The electric circuit diagram of the radio engineering filter}\label{MatMod1}
\end{figure}

The currents and voltages in the circuit satisfy the Kirchhoff equations, as well as the constraint equations which describe operation modes of the electric circuit elements:
\begin{center}
$I_L =I +I_\psi$, $U_\psi =U_\varphi +U_r+ U_C$, $e=U_{\varphi _0} +U_L+U_\psi$, $U_L =\frac{d(LI_L)}{dt}$, $I =\frac{d(CU_C)}{dt}+gU_C + h(U_C)$, $U_r =r I$, $U_{\varphi} =\varphi(I)$, $U_{\varphi _0} =\varphi _0(I_L)$, $U_{\psi} =\psi(I_\psi)$.
\end{center}
From these equations we obtain the system with the variables $x_1 =I_L$, $x_2 =U_C$, $x_3 =I$:
\begin{eqnarray}
 L\frac{d}{dt}x_1 + x_2 +r\, x_3 &=& e(t)-\varphi_0(x_1)-\varphi(x_3), \label{os1} \\
 C\frac{d}{dt}x_2 +g x_2 -x_3 &=& -h(x_2), \label{os2} \\
 x_2 +r x_3 &=& \psi(x_1-x_3)-\varphi(x_3). \label{os3}
\end{eqnarray}
The system describes transient process in the electrical circuit (i.e., the process of transition from one operation mode of the electric circuit to another).

It is assumed that the linear parameters $L$, $C$, $r$, $g$ are positive and real, $\varphi _0 \in C^1 ({\mathbb R})$, $\varphi \in C^1 ({\mathbb R})$, $\psi\in C^1 ({\mathbb R})$, $h \in C^1 ({\mathbb R})$ and $e \in C([0,\infty ),{\mathbb R})$.

The vector form of the system \eqref{os1}--\eqref{os3} is the semilinear DAE
\begin{equation}\label{DAE_MM}
 \frac{d}{dt}[Ax]+Bx=f(t,x),
\end{equation}
where $x=(x_1, x_2, x_3)^T=(I_L, U_C, I)^T\in {\mathbb R}^3$,
\begin{equation*}
f(t,x) \!=\!\begin{pmatrix} e(t)-\varphi_0(x_1)-\varphi(x_3) \\ -h(x_2) \\ \psi(x_1-x_3)-\varphi(x_3) \end{pmatrix}\!,\; A\!=\!\begin{pmatrix} L & 0 & 0 \\ 0 & C & 0 \\ 0 & 0 & 0 \end{pmatrix}\!,\;
B\!=\!\begin{pmatrix} 0 & 1 & r \\ 0 & g & -1 \\ 0 & 1 & r \end{pmatrix}\!.
\end{equation*}

It is easy to verify that $\lambda A+B$ is a regular pencil of index 1.

The projection matrices $P_i$, $Q_i$ and the matrix $G^{-1}$ have the form
\[
P_1 =\left(\begin{array}{ccc} 1 & 0 & 0 \\ 0 & 1 & 0 \\ 0 & -r^{-1} & 0 \end{array}\right),\;
P_2 =\left(\begin{array}{ccc} 0 & 0 & 0 \\ 0 & 0 & 0 \\ 0 & r^{-1} & 1 \end{array}\right),\;
Q_1 =\left(\begin{array}{ccc} 1 & 0 & -1 \\ 0 & 1 & r^{-1} \\ 0 & 0 & 0 \end{array}\right),
\]
\[
Q_2 =\left(\begin{array}{ccc} 0 & 0 & 1 \\ 0 & 0 & -r^{-1} \\ 0 & 0 & 1 \end{array}\right),\;
G^{-1} = \left(\begin{array}{ccc} L^{-1} & 0 & -L^{-1} \\ 0 & C^{-1} & (Cr)^{-1} \\ 0 & -(Cr)^{-1} & (Cr-1)C^{-1}r^{-2} \end{array}\right).
\]

The projections of the vector $x$ have the form
\[
x_{p_1}\!=\!P_1 x\!=\!(x_1, x_2, -r^{-1}x_2)^T\!=\!(a, -rb, b)^T\!,\: x_{p_2}\!=\!P_2 x \!=\!(0, 0, r^{-1}x_2+x_3)^T \!=\!(0, 0, u)^T\!,
\]
where $a=x_1$, $b=-r^{-1}x_2$, $u=r^{-1}x_2+x_3 \in {\mathbb R}$.

The equation $Q_2[Bx-f(t,x)]=0$, determining the manifold $L_0$ from \eqref{soglreg2}, is equivalent to the equation \eqref{os3}.
Taking into account the new notation, the condition \eqref{soglreg2} holds if for any $a,\, b \in {\mathbb R}$ there exists $u \in {\mathbb R}$ such that
\begin{equation}\label{os4}
 ru =\psi(a-b-u)-\varphi(b+u).
\end{equation}

Consider the operator function $\tilde{\Phi }\colon X_2 \to L({\mathbb R}^3,Y_2)$,
\begin{center}
$\tilde{\Phi }(x_{p_2})\! =\! \left[\!\frac{\partial}{\partial x}\big(Q_2 f(t_*,x_{p_1}^*\!+x_{p_2})\big)\! -B\right]\! P_2  = \!
\big(\psi'(a_*\!-b_*\!-u)+\varphi'(b_*\!+u)+r \big)\!\! \!
\begin{pmatrix} 0 & -r^{-1} & -1 \\ 0 & r^{-2} & r^{-1} \\ 0 & -r^{-1} & -1 \end{pmatrix}$\!,
\end{center}
where $\psi'(a-b-u)= \left. \frac{d\psi(y)}{dy}\right|_{y=a-b-u}$, $\varphi'(b+u) = \left.\frac{d\varphi(y)}{dy}\right|_{y=b+u}$, $t_*\in [0,\infty )$, $a_*,b_*\in {\mathbb R}$, $x_{p_1}^*\hm=(a_*, -rb_*, b_*)^T$. Since the spaces $X_2$, $Y_2$ are one-dimensional, the invertibility of the operator function $\Phi = \left.\tilde{\Phi }\right|_{X_2}\colon X_2\hm\to L(X_2,Y_2)$ (i.e., the operator $\Phi(x_{p_2})\hm\in L(X_2,Y_2)$ is the restriction of the operator $\tilde{\Phi }(x_{p_2})\in L({\mathbb R}^3,Y_2)$ to $X_2$) is equivalent to the basis invertibility of $\Phi $. Let for any (fixed) $\Hat u,\, \Hat{\Hat u},\, a_*,\, b_*\in {\mathbb R}$ satisfying \eqref{os4} the condition $\psi'(a_*-b_*-u_*) \hm+ \varphi'(b_*+u_*)\not =-r$  be fulfilled for any $u_*\in conv \{\Hat u, \Hat{\Hat u}\}$.  Then the operator $\Lambda= \left.\tilde{\Lambda}\right|_{X_2}\hm\in L(X_2,Y_2)$, where $\tilde{\Lambda} =\tilde{\Phi }(x_{p_2}^*)$, $x_{p_2}^*=(0, 0, u_*)^T$, is invertible, since from ${\tilde{\Lambda}\, x_{p_2}=0}$, $x_{p_2}\hm\in X_2$, it follows that $x_{p_2}=0$.
Hence, for any $\Hat u,\, \Hat{\Hat u},\, a_*,\, b_*\in {\mathbb R}$ satisfying \eqref{os4} the operator function $\Phi $ \eqref{funcPhi} is basis invertible on the convex hull $conv\{\Hat x_{p_2},\Hat{\Hat x}_{p_2}\}$, where $\Hat x_{p_2}=(0, 0, \Hat u)^T$, $\Hat{\Hat x}_{p_2}=(0, 0,\Hat{\Hat u})^T$.

Choose $H =\begin{pmatrix} 2L & 0 & 0 \\ 0 & Cr & 0 \\ 0 & 0 & Cr^3 \end{pmatrix}$. Then
\begin{multline*}
\big(HP_1 x,G^{-1}[-BP_1 x +Q_1 f(t,x)]\big)= \\
=2 \big[-(g r\hm+ 1)x_2^2- x_1 \varphi_0(x_1) \hm+ (x_2\hm-x_1)\psi(x_1-x_3)-rx_2 h(x_2)- x_2 \varphi (x_3) + x_1e(t)\big].
\end{multline*}

Since $\varphi,\, \psi \!\in\!  C^1 ({\mathbb R})$, there exists a constant $C$ such that for any fixed $\Tilde x_{p_2}=(0,0,\Tilde u)^T$, where $\Tilde u\! \in\! {\mathbb R}$, and for all $t\! \in\! [0,\infty)$, $\|x_{p_1}\|\! \le\! M$, where $M$ is a constant, 
the estimate $\|Q_2f(t,x_{p_1}+\Tilde x_{p_2})\|\!\le\! \sqrt{2+r^{-2}}\! \mathop{\max }\limits_{\|x_{p_1}\| \le M}\! |\psi(a-b-\Tilde u)\hm-\varphi(b+\Tilde u)|\!\le\! C$ is fulfilled. Hence, the condition \eqref{LagrA1} is satisfied for any fixed $\Tilde x_{p_2}\!=(0,0,\Tilde u)^T$ (i.e., any fixed $\Tilde u\!\in\! {\mathbb R}$).

\subsection{Conclusions}\label{ConclStab}
By Theorem~\ref{Th_Ust1} for each initial point $(t_0,x^0)\in [0,\infty )\times {\mathbb R}^3$ ($x^0 =(x_1^0,x_2^0,x_3^0)^T$) satisfying the consistency condition (the equation \eqref{os3})
\begin{equation}\label{Consist}
x_2^0 +r x_3^0 = \psi(x_1^0-x_3^0)-\varphi(x_3^0)
\end{equation}
there exists a unique solution $x(t)$ of the Cauchy problem for the DAE \eqref{DAE_MM} with the initial condition
\begin{equation}\label{ini_MM}
x(t_0)=x^0
\end{equation}
on the whole interval $[t_0, \infty)$ if: 
\begin{enumerate}[1)]
 \item  for any $a,\, b \in {\mathbb R}$ there exists $u \in {\mathbb R}$ such that \eqref{os4} is fulfilled;
 \item  for any $\Hat u,\, \Hat{\Hat u},\, a_*, b_*\!\in\! {\mathbb R}$ satisfying \eqref{os4} the condition $\psi'(a_*\!-b_*\!-u_*) \hm+ \varphi'(b_*\!+u_*)\not =-r$  is fulfilled for any $u_*\in conv \{\Hat u, \Hat{\Hat u}\}$;
 \item  for some number $R>0$ there exist functions $k\in C([0,\infty),{\mathbb R})$, $U\in C((0,\infty), (0,\infty))$ such that $\int\limits_c^{+\infty} \frac{dv}{U(v)} \hm=+\infty$ and\; $-(g r+1)x_2^2- x_1 \varphi_0(x_1)+(x_2-x_1) \psi(x_1-x_3)-rx_2 h(x_2)\hm-  x_2 \varphi (x_3)+ x_1e(t)\hm\le k(t)\, U\big(Lx_1^2+Crx_2^2\big)$\; for any $t\ge 0$, $x\!\in\!  {\mathbb R}^3$ such that \eqref{os3}, $\|P_1 x\|= \sqrt{x_1^2 +(1+r^{-2})x_2^2} \ge R$.
\end{enumerate}
If, additionally, $\int\limits_{t_0}^{+\infty} k(t)\, dt <+\infty$ and
\begin{enumerate}[4)]
 \item there exists $\Tilde u \in {\mathbb R}$ such that for any $\Tilde{\Tilde u},\, a_*,\, b_* \in {\mathbb R}$ satisfying \eqref{os4} the condition \linebreak
     $\psi'(a_*-b_*-u_*) \hm+ \varphi'(b_*+u_*)\not =-r$  is fulfilled for any $u_*\in conv\{\Tilde u,\, \Tilde{\Tilde u}\}\setminus \{\Tilde u\}$ \linebreak
     (i.e., $u_*=\alpha \Tilde{\Tilde u} + (1-\alpha) \Tilde u$, $\alpha\! \in\! (0,1]\,$),
\end{enumerate}
then for the initial points $(t_0,x^0)$ the equation \eqref{DAE_MM} is Lagrange stable.

In terms of physics it means that if the input voltage $e(t)\! \in\! C([0,\infty ),{\mathbb R})$, the nonlinear resistances $\varphi,\, \varphi _0,\, \psi \in C^1 ({\mathbb R})$ and the nonlinear conductance $h \in C^1 ({\mathbb R})$ satisfy the aforementioned conditions 1)--3), then for any initial time moment $t_0\ge 0$ and any initial values $I_L(t_0)$, $U_C(t_0)$, $I(t_0)$  satisfying $U_C(t_0)+r I(t_0) \hm= \psi(I_L(t_0)-I(t_0)) - \varphi(I(t_0))$ there exist the currents $I_L(t)$, $I(t)$ and voltage $U_C(t)$ in the circuit Fig.~\ref{MatMod1} for all $t\ge t_0$, which are uniquely determined by the initial values. The functions $I_L(t)$, $U_C(t)$ are continuously differentiable and the function $I(t)$ is continuous on $[t_0, \infty)$. The currents and voltage are bounded for all $t\ge t_0$ (Lagrange stability) if, additionally, $\int\limits_{t_0}^{+\infty} k(t)\, dt <+\infty$ and the aforementioned condition 4) is satisfied. The remaining currents and voltages in the circuit are uniquely expressed in terms of $I_L(t)$, $I(t)$, $U_C(t)$.

Let us consider \emph{the particular cases}:
\begin{eqnarray}
 \varphi_0(y)=\alpha _1 y^{2k-1},\: \varphi(y)=\alpha _2y^{2l-1},\:  \psi(y)=\alpha _3y^{2j-1},\: h(y)=\alpha _4y^{2s-1}, \label{os6}\\
  \varphi _0 (y)=\alpha _1y^{2k-1},\: \varphi(y)=\alpha _2 \sin y,\: \psi(y)=\alpha _3\sin y,\: h(y)=\alpha _4 \sin y, \label{os7}
\end{eqnarray}
where $k,l,j,s\in \mathbb N$, $\alpha _i >0$, $i=\overline{1,4}$, $y\in {\mathbb R}$. Note that functions of such type for nonlinear resistances and conductances are encountered in real radio engineering devices.

For the functions of the form \eqref{os6} and each initial point $(t_0,x^0)$ satisfying \eqref{Consist}, there exists a unique solution of the Cauchy problem \eqref{DAE_MM}, \eqref{ini_MM} on $[t_0, \infty )$ if $j\le k$, $j\le s$ and $\alpha _3$ is sufficiently small. For the functions of the form \eqref{os7} and each initial point $(t_0,x^0)$ satisfying \eqref{Consist}, there exists a unique solution of the Cauchy problem \eqref{DAE_MM}, \eqref{ini_MM} on $[t_0, \infty )$ if ${\alpha _2 + \alpha _3 < r}$.  If, additionally, $\mathop{\sup }\limits_{t\in [0,\infty )}\! |e(t)| <\! +\infty$ or $\int\limits_{t_0}^{+\infty}\! |e(t)|\, dt <\!+\infty$, then for the initial points $(t_0,x^0)$ the DAE \eqref{DAE_MM} is Lagrange stable (in both cases), i.e., every solution of the DAE is bounded.
In particular, these requirements are fulfilled for voltages of the form
\begin{equation}\label{os8}
e(t)=\beta(t+\alpha)^{-n},\; e(t)=\beta e^{-\alpha t},\; e(t)=\beta e^{-\frac{(t-\alpha)^2}{\sigma^2}},\; e(t)=\beta\sin(\omega t + \theta),
\end{equation}
where $\alpha> 0$, $\beta,\, \sigma,\, \omega \in {\mathbb R}$, $n \in {\mathbb N}$, $\theta \in [0, 2\pi]$. For voltage having the form
\begin{equation}\label{os9}
e(t)= \beta(t+\alpha)^n,\; \alpha,\,\beta \in {\mathbb R},\; n \in {\mathbb N},
\end{equation}
\noindent global solutions exist, but they are not bounded on the whole interval $[t_0, \infty )$.

\subsection{Numerical analysis}\label{NumStab}

We find approximate solutions of the DAE \eqref{DAE_MM} (the system \eqref{os1}--\eqref{os3}) with the initial condition \eqref{ini_MM} using the numerical method presented in \cite{Fil.num_meth}.

Choose the parameters $L = 500$, $C = 0.5$, $r =2$, $g =0.2$ and the input voltage $e(t) \hm= 100\, e^{-t} \sin (5\,t)$. For the nonlinear resistances and conductance of the form \eqref{os6} with  $k=l=j\hm=s\hm=2$, $\alpha _i=1$, $i=\overline{1,4}$, the numerical solution with the initial values $t_0 = 0$, $x^0 =(0,0,0)^T$ is obtained. The components of the obtained solution are shown in Fig.~\ref{osf7_9}.
\begin{figure}[H]%
\centering\footnotesize
\subfloat[(a)][The current $I_L(t)$]{\includegraphics[width=6.7cm]{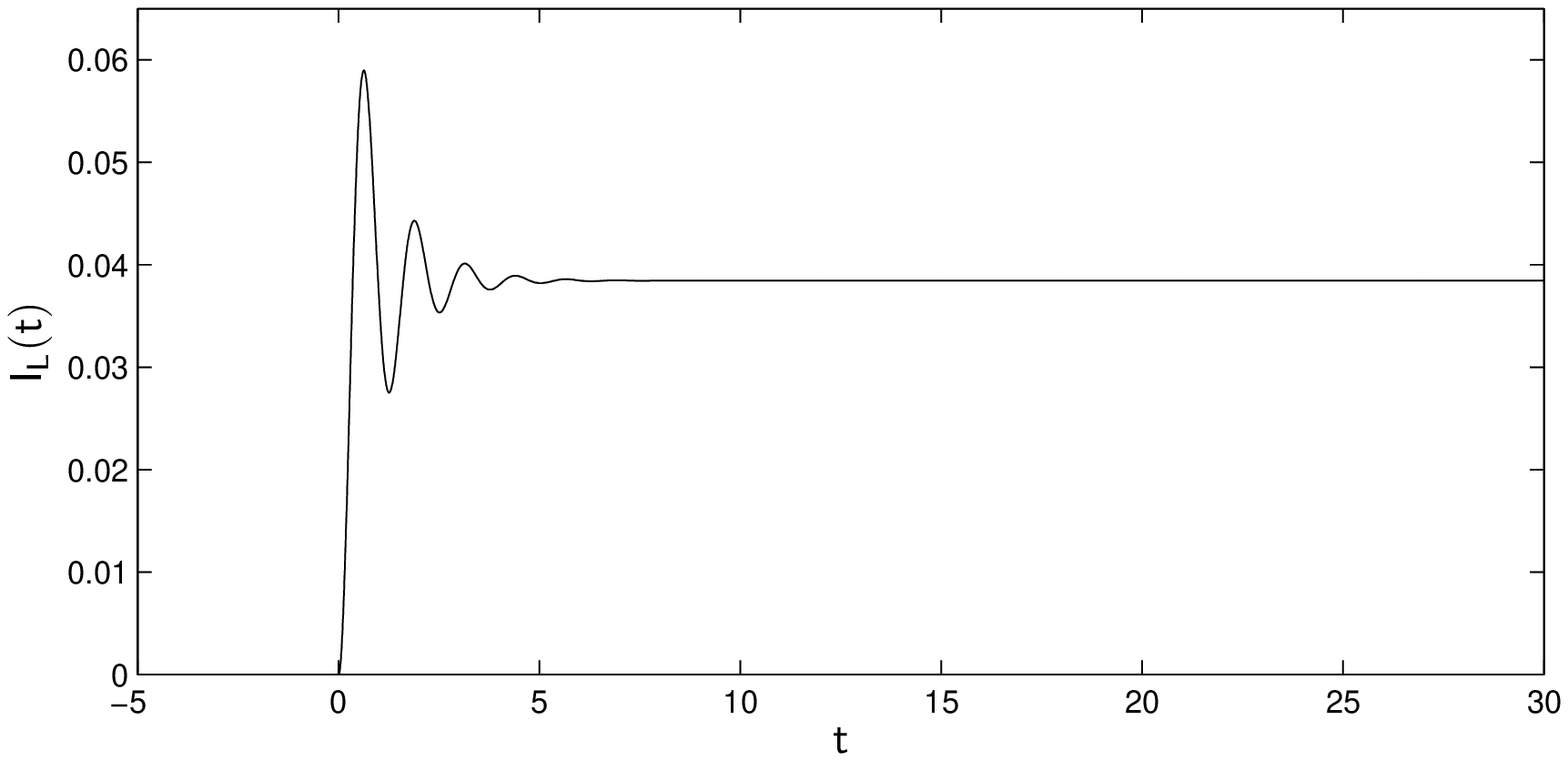}}
\subfloat[(b)][The voltage $U_C (t)$]{\includegraphics[width=6.7cm]{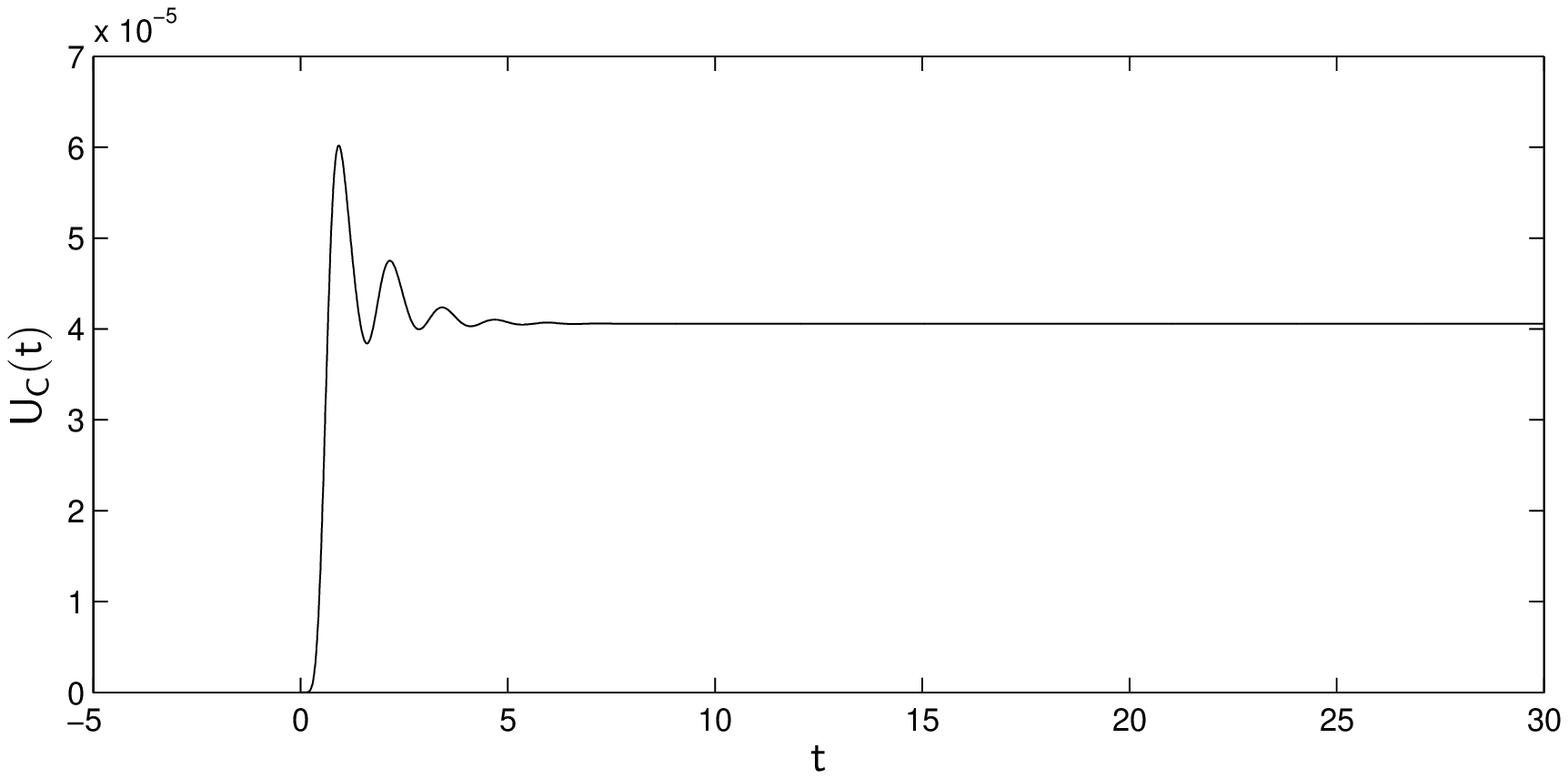}}
\end{figure}

\begin{figure}[H]%
\centering\footnotesize
\subfloat[(c)][The current $I(t)$]{\includegraphics[width=6.7cm]{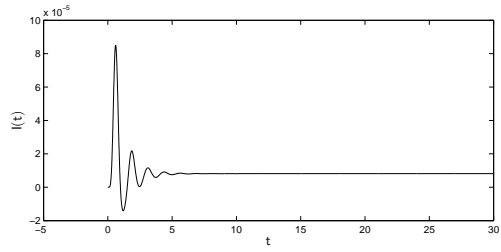}}
\caption{(a)--(c)\: The components of the numerical solution.}\label{osf7_9}
\end{figure}

The components of the solution for the electrical circuit with the linear parameters $L \hm= 50$, $C = 1$, $r = 0.001$, $g=1$, the nonlinear parameters \eqref{os6}, where  $k=l=j=s=2$, $\alpha _i=1$, $i=\overline{1,3}$, $\alpha _4=0.01$, and the input voltage $e(t) = 2 \sin t$, and for the initial values $t_0 = 0$, $x^0 =(0,0,0)^T$ are shown in Fig.~\ref{5_43__5_45}.
\begin{figure}[H]%
\centering\footnotesize
\subfloat[(a)][The current $I_L(t)$]{\includegraphics[width=6.7cm]{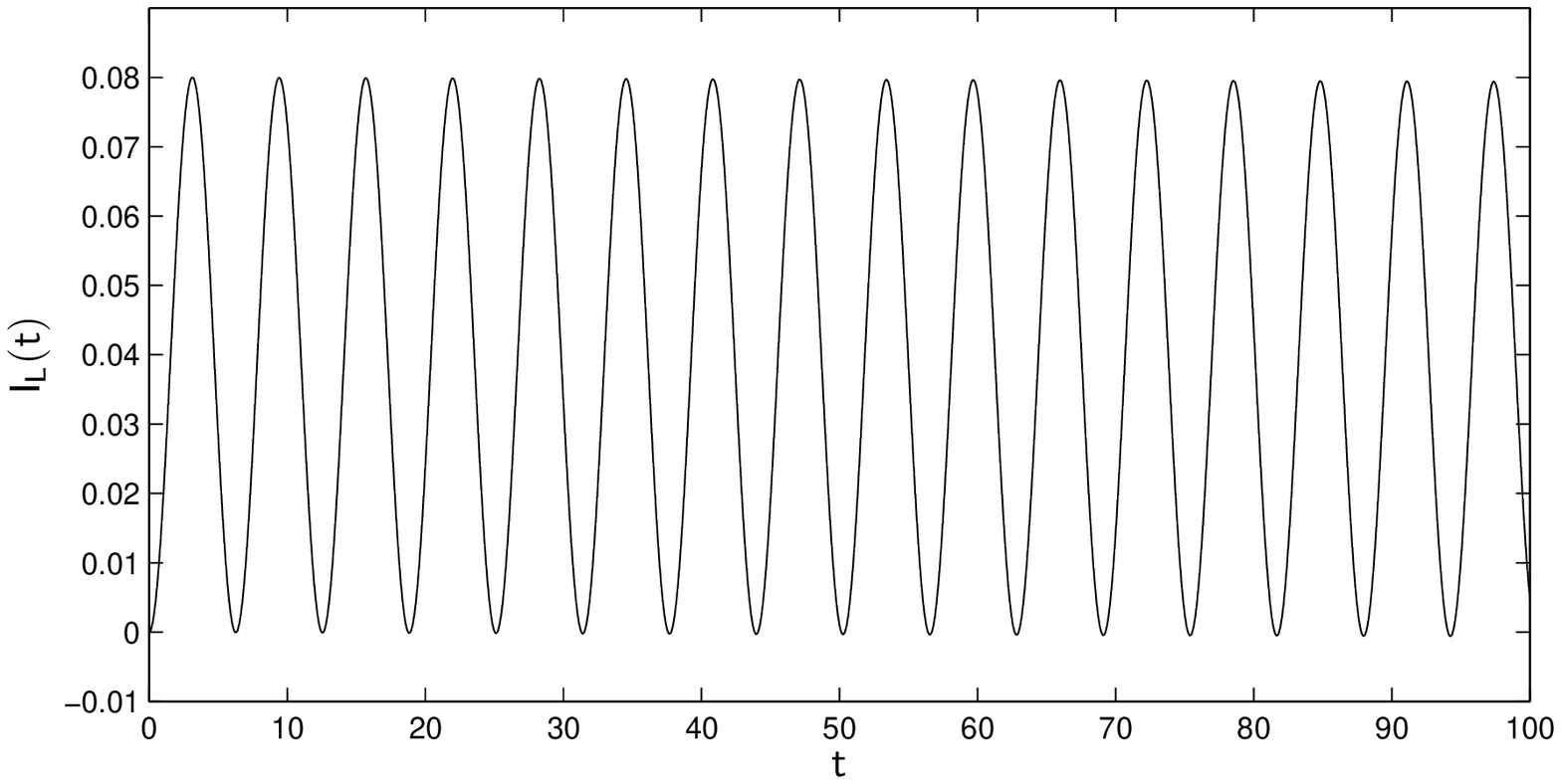}}
\subfloat[(b)][The voltage $U_C (t)$]{\includegraphics[width=6.7cm]{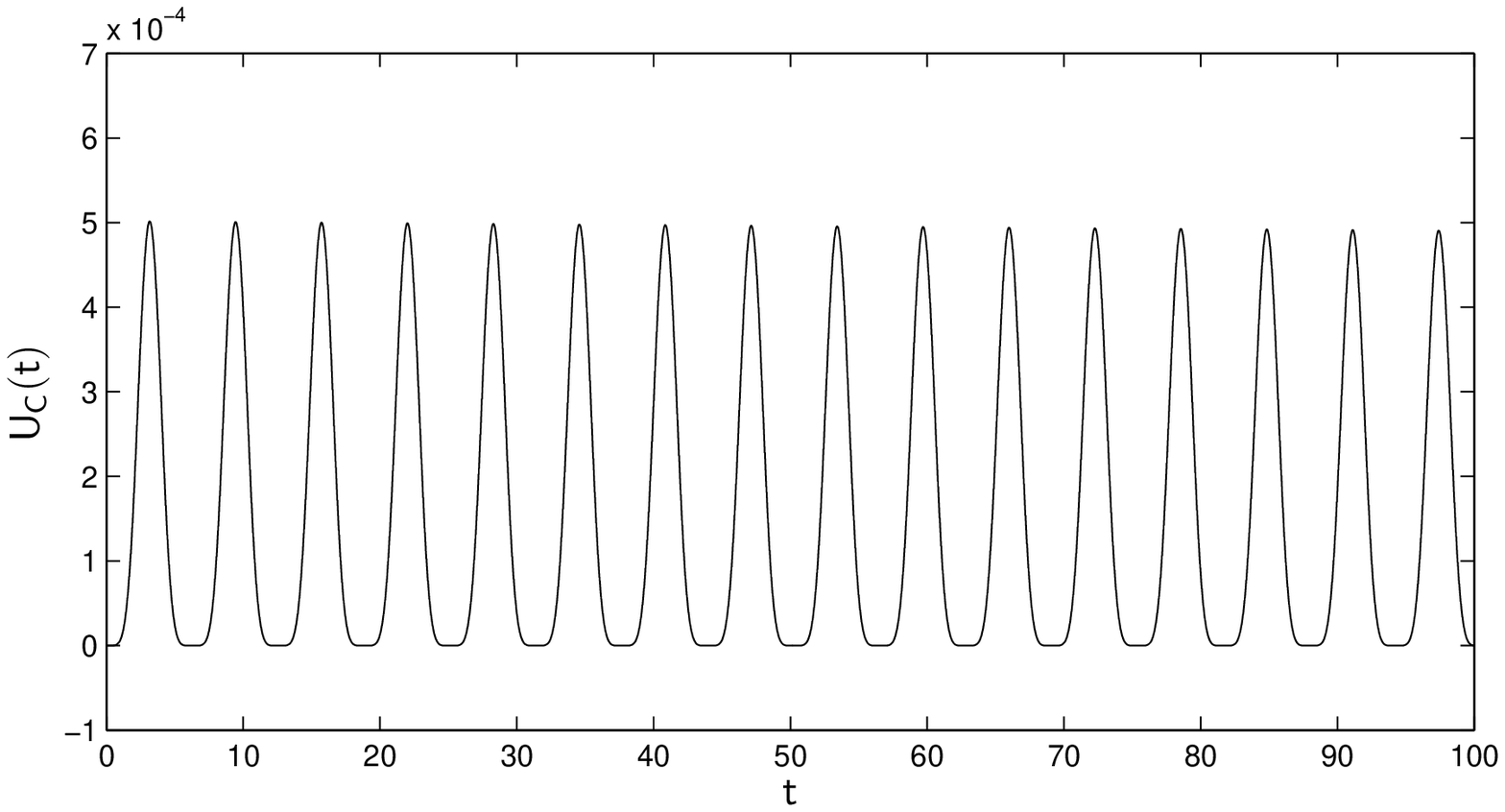}}
\end{figure}

\begin{figure}[H]%
\centering\footnotesize
\subfloat[(c)][The current $I(t)$]{\includegraphics[width=6.7cm]{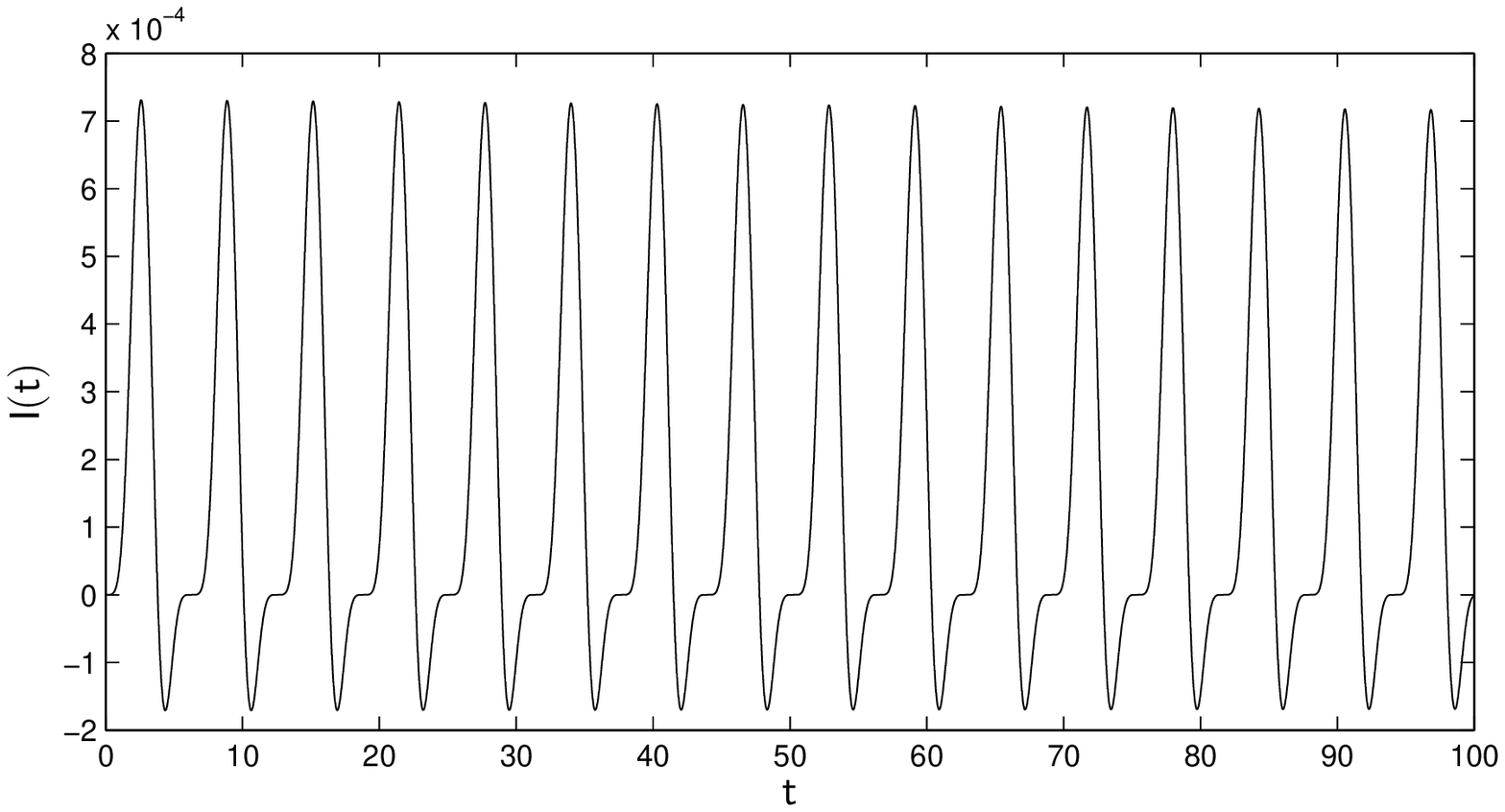}}
\caption{(a)--(c)\: The components of the numerical solution.}\label{5_43__5_45}
\end{figure}

For the linear parameters $L = 300$, $C = 0.5$, $r = 2.6$, $g =0.2$, the nonlinear resistances and conductance \eqref{os7}, where $k=2$, $\alpha _1=0.5$, $\alpha _2= 1.5$, $\alpha _3= 1$, $\alpha _4= 3$, and the voltage $e(t)= 200\, \sin(0.5\, t)-0.2$, the solution components with the initial values $t_0=0$,  $x^0 =(\pi /6, 0.5, 0)^T$ are shown in Fig.~\ref{5_73__5_75}.
\begin{figure}[H]%
\centering\footnotesize
\subfloat[(a)][The current $I_L(t)$]{\includegraphics[width=6.75cm]{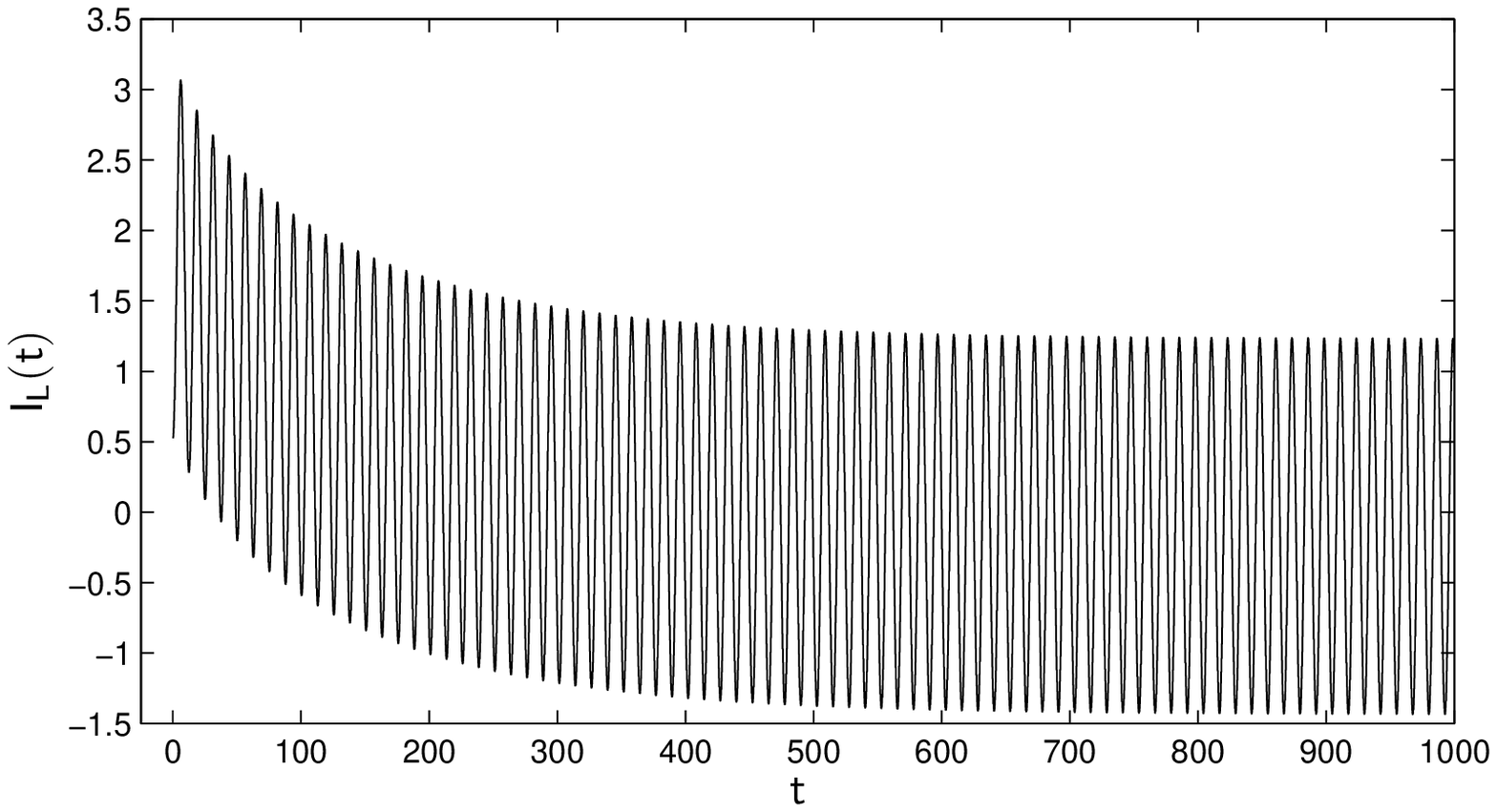}}
\subfloat[(b)][The voltage $U_C (t)$]{\includegraphics[width=6.75cm]{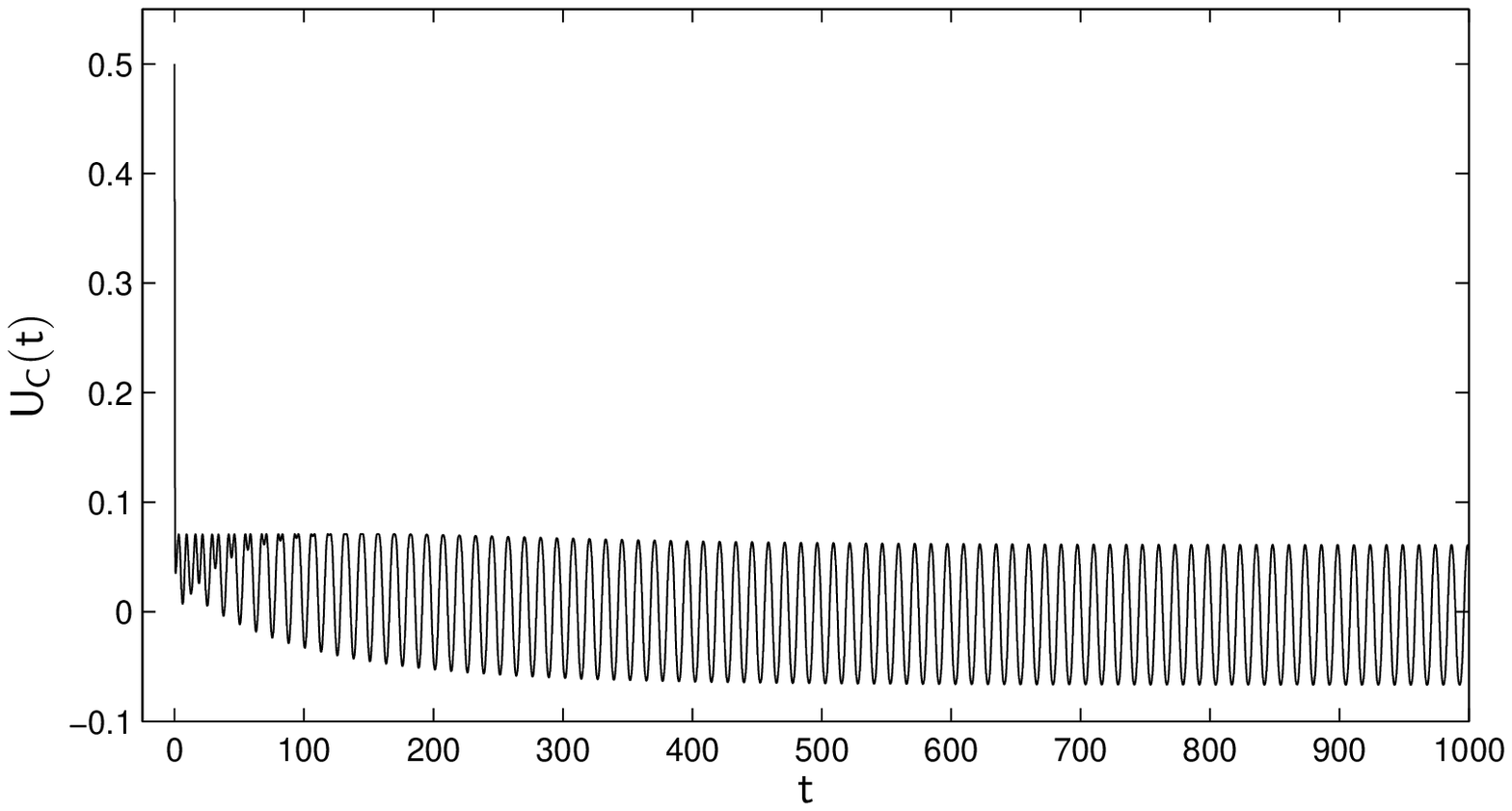}}
\end{figure}

\begin{figure}[H]%
\centering\footnotesize
\subfloat[(c)][The current $I(t)$]{\includegraphics[width=6.8cm]{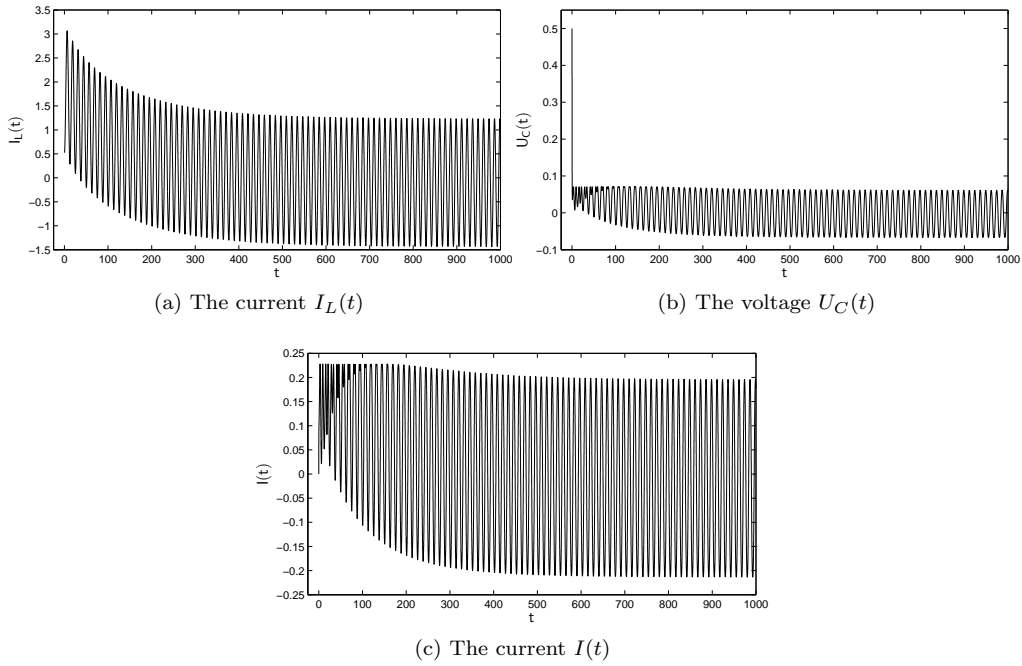}}
\caption{(a)--(c)\: The components of the numerical solution.}\label{5_73__5_75}
\end{figure}

For the linear parameters $L = 1$, $C = 5$, $r = 1.51$, $g \hm=5$, the nonlinear parameters \eqref{os7}, where $k=2$, $\alpha _i=1$, $i=1,2,4$, $\alpha _3=0.5$, the voltage $e(t) = (t+30)^{-2}$ and the initial values $t_0 = 0$, $x^0 =(0, 0, 0)^T$ the solution components are shown in Fig.~\ref{5_67__5_69}.
\begin{figure}[H]%
\centering\footnotesize
\subfloat[(a)][The current $I_L(t)$]{\includegraphics[width=6.7cm]{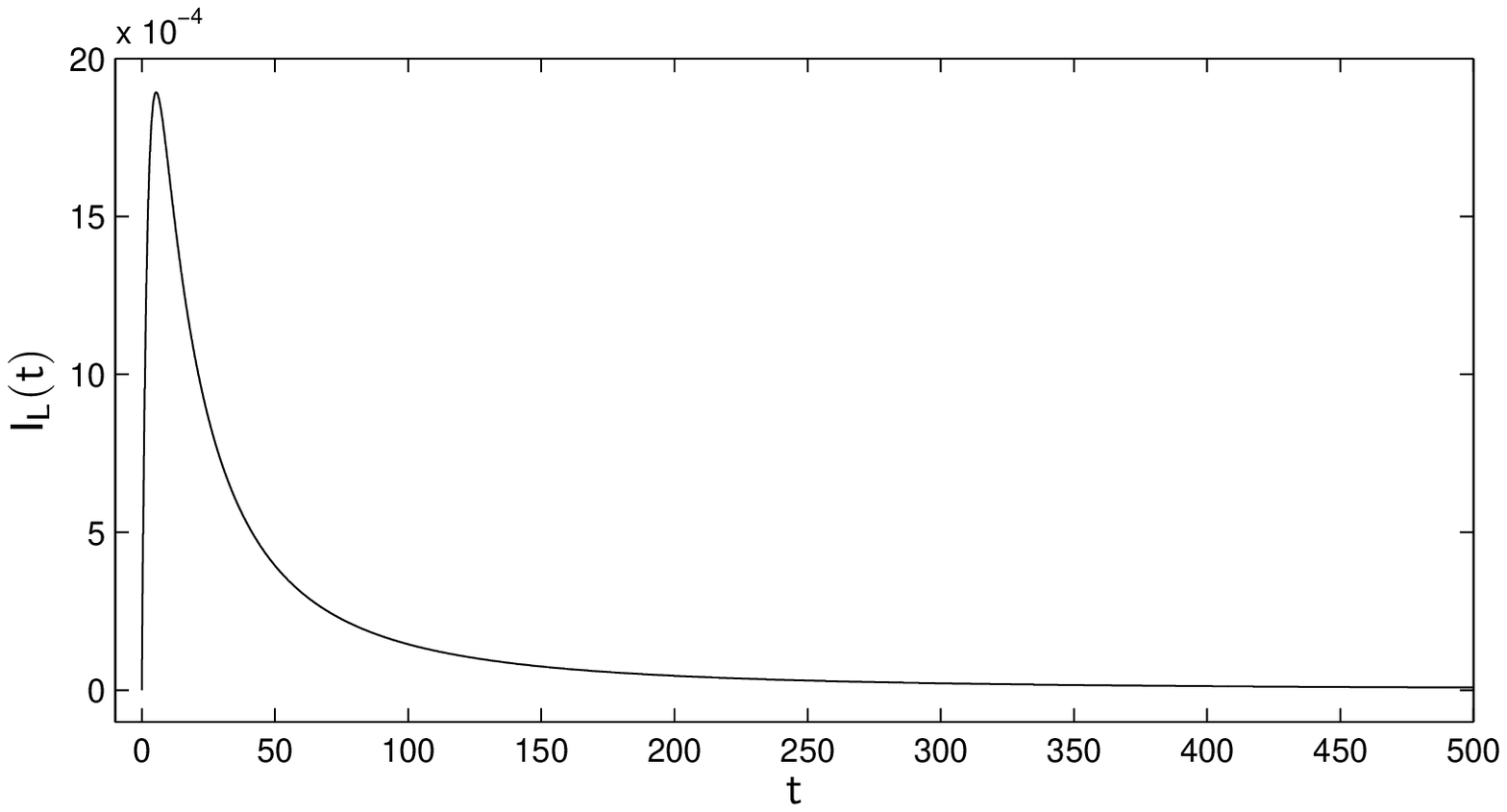}}
\subfloat[(b)][The voltage $U_C (t)$]{\includegraphics[width=6.7cm]{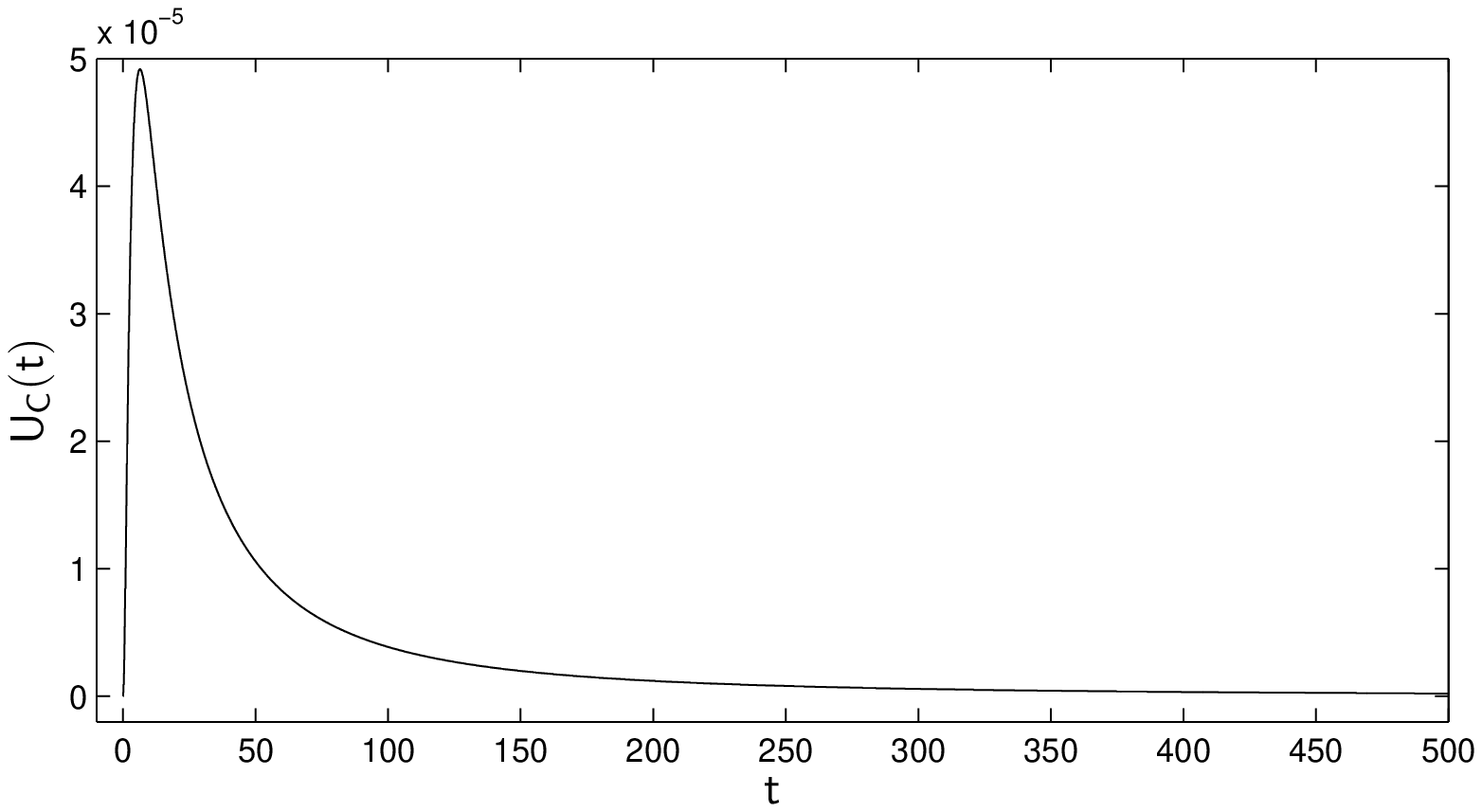}}
\end{figure}

\begin{figure}[H]%
\centering\footnotesize
\subfloat[(c)][The current $I(t)$]{\includegraphics[width=6.7cm]{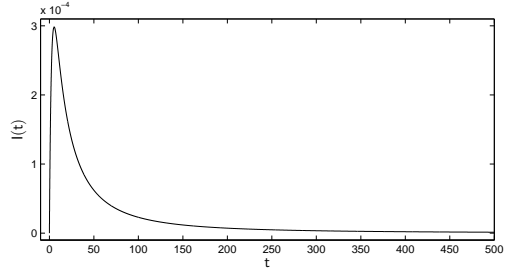}}
\caption{(a)--(c)\: The components of the numerical solution.}\label{5_67__5_69}
\end{figure}

The components of the solution for the electrical circuit with the linear parameters ${L = 1000}$, $C = 0.5$, $r =2$, $g =0.3$, the nonlinear parameters \eqref{os6} with $k=l=j=s=2$, $\alpha _i=1$, $i=\overline{1,4}$, the input voltage $e(t) =-t^2$, and for the initial values $t_0 = 0$, $x^0 =(0,0,0)^T$ are shown in Fig.~\ref{osglf1_f3}.
\begin{figure}[H]%
\centering\footnotesize
\subfloat[(a)][The current $I_L(t)$]{\includegraphics[width=6.7cm]{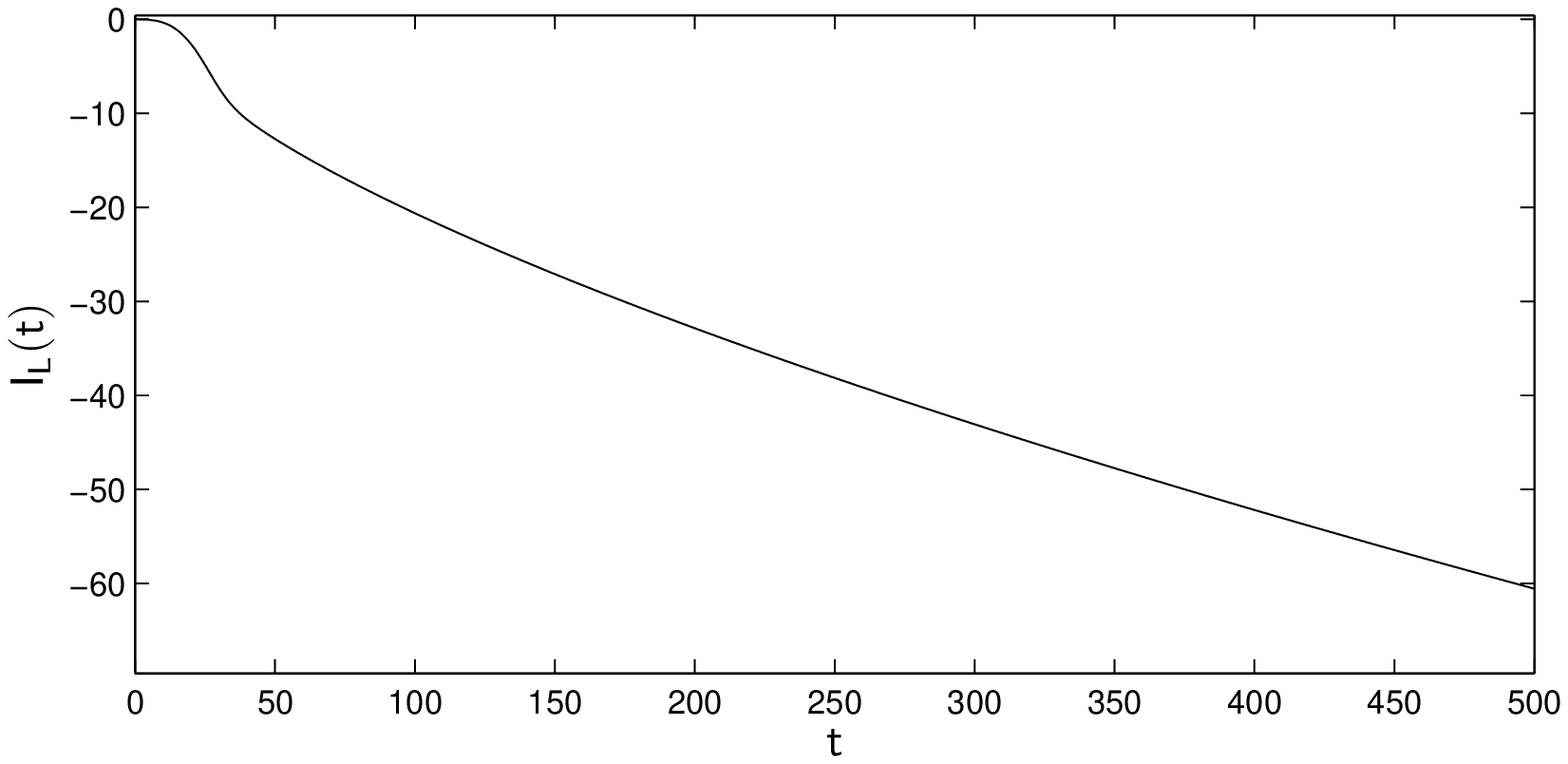}}
\subfloat[(b)][The voltage $U_C (t)$]{\includegraphics[width=6.7cm]{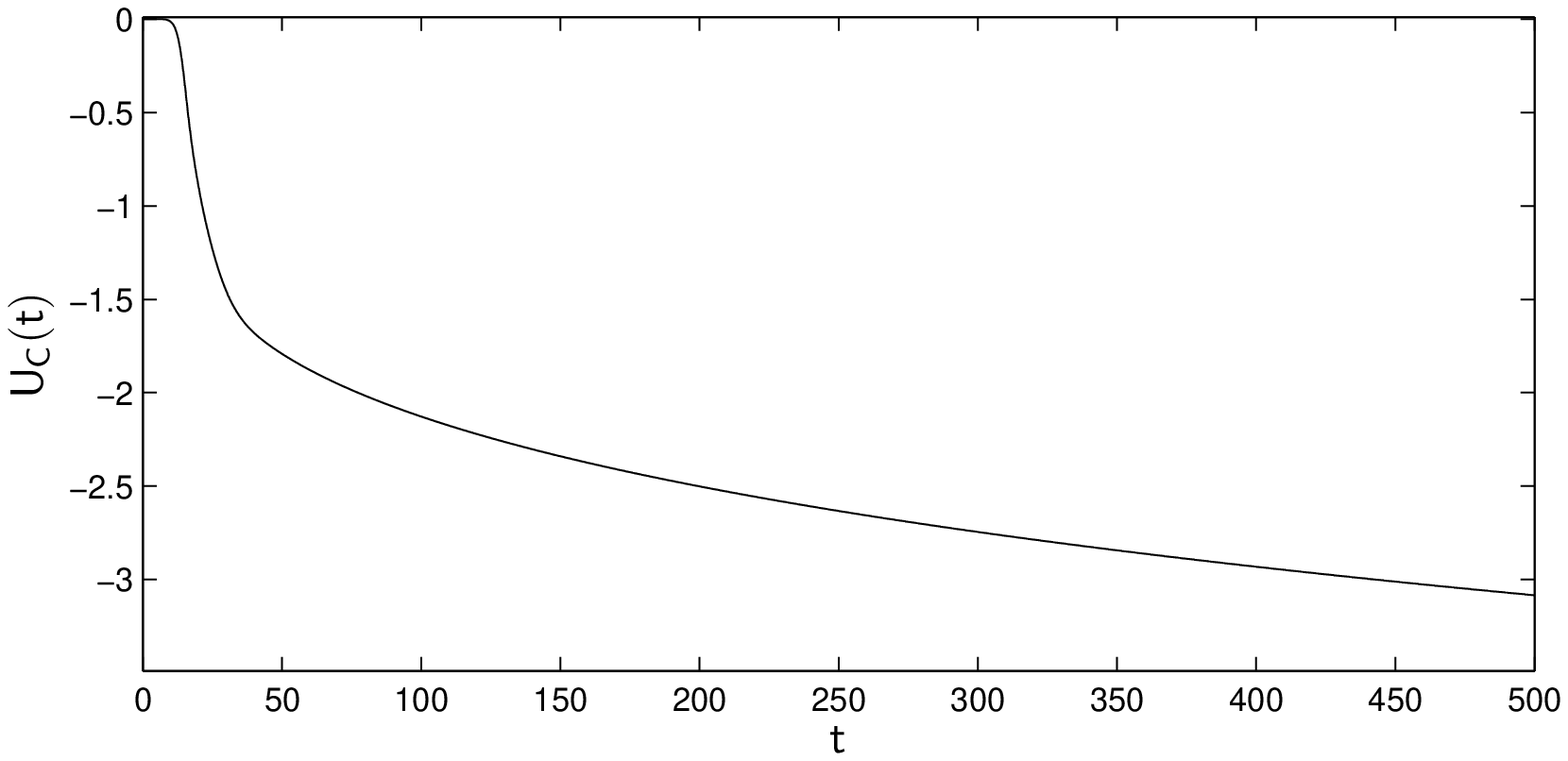}}
\end{figure}

\begin{figure}[H]%
\centering\footnotesize
\subfloat[(c)][The current $I(t)$]{\includegraphics[width=6.7cm]{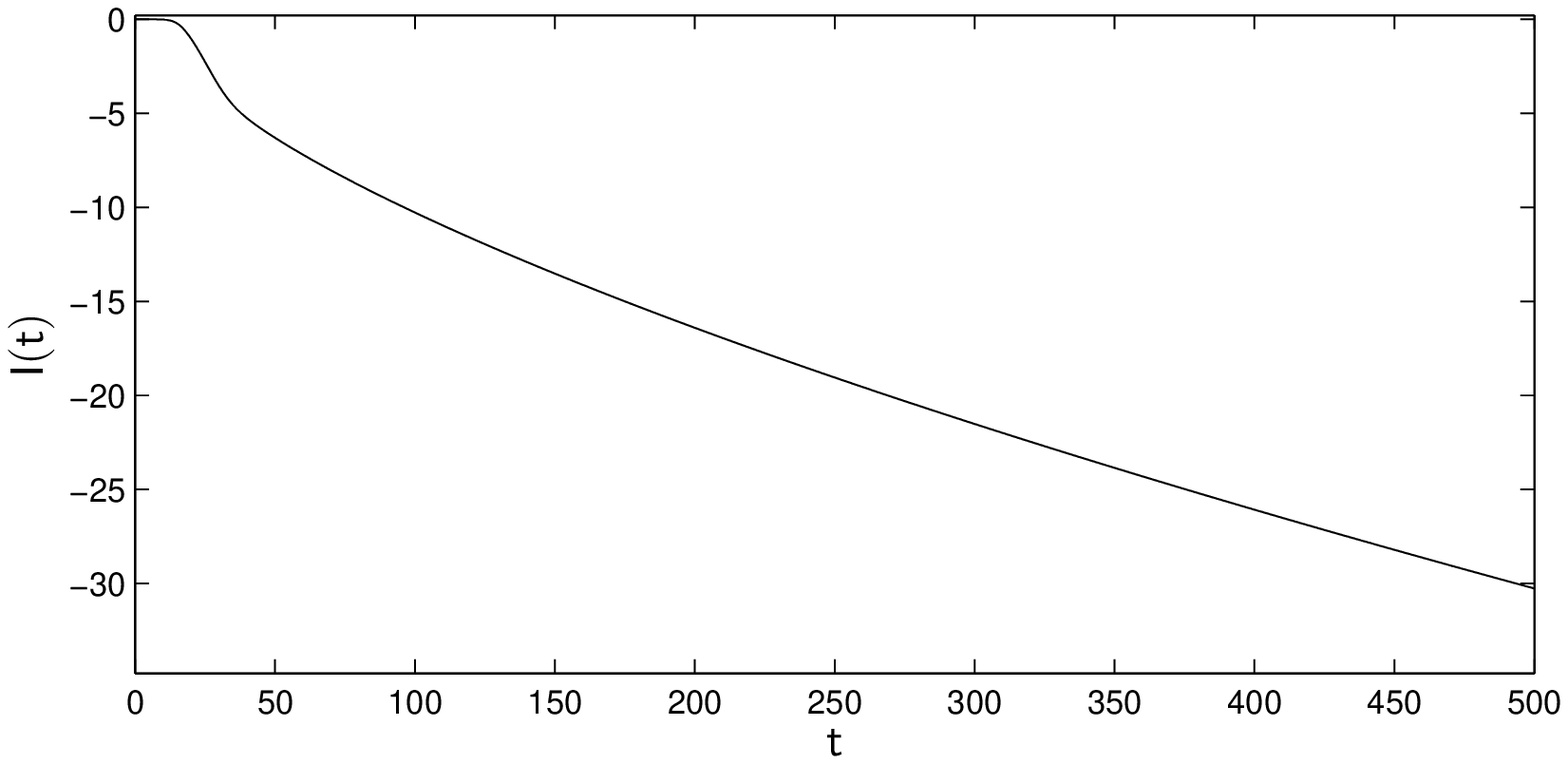}}
\caption{(a)--(c)\: The components of the numerical solution.}\label{osglf1_f3}
\end{figure}

For the linear parameters $L = 100$, $C = 5$, $r = 3$, $g =4$, the nonlinear parameters \eqref{os7}, where $k=2$, $\alpha _1=1$, $\alpha _2= 0.9$,  $\alpha _3=2$,  $\alpha _4=5$, the voltage $e(t) = (t-50)^3$ and the initial values $t_0 = 0$, $x^0 =(0, 0, 0)^T$ the solution components are shown in Fig.~\ref{5_76__5_78}.
\begin{figure}[H]%
\centering\footnotesize
\subfloat[(a)][The current $I_L(t)$]{\includegraphics[width=6.7cm]{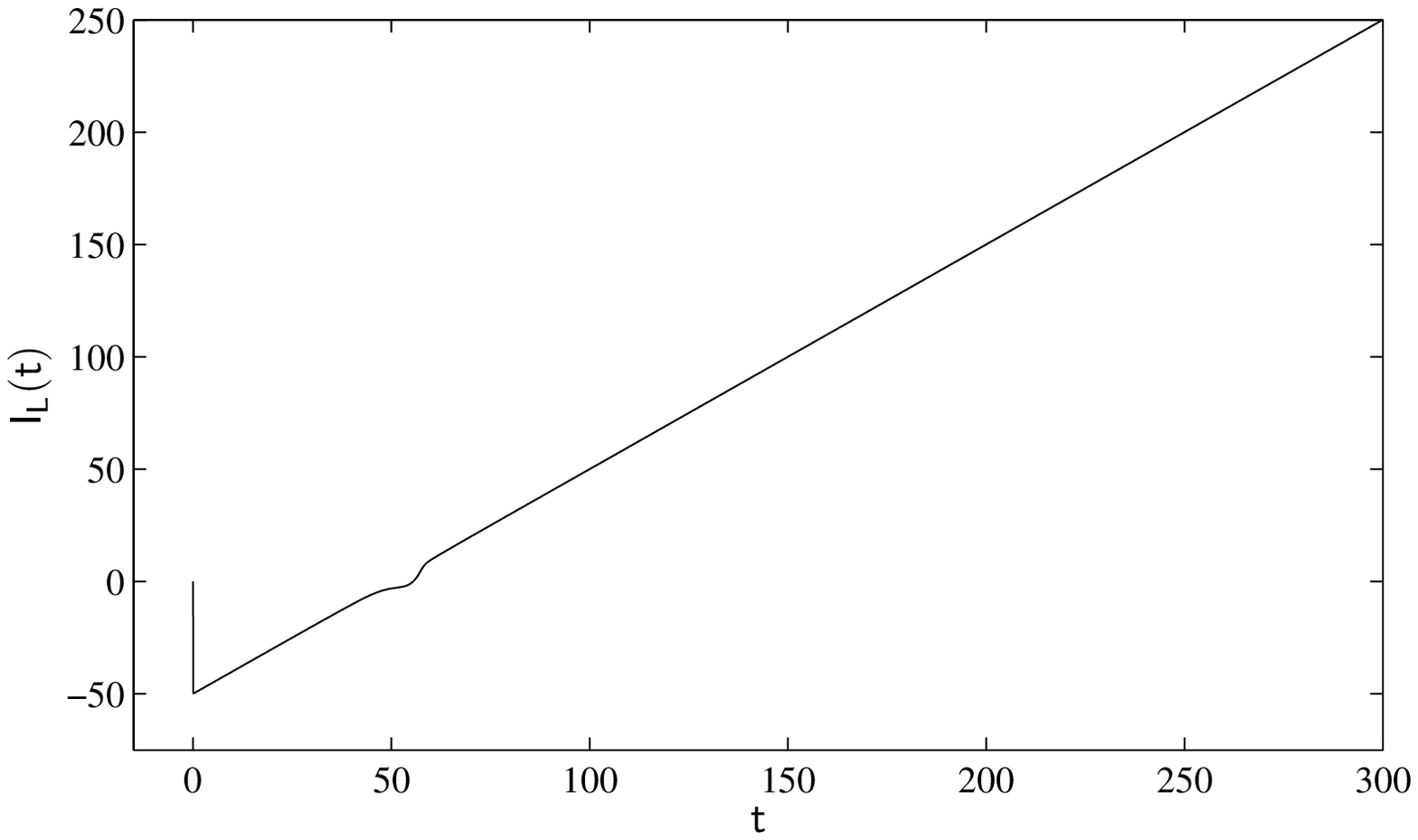}}
\subfloat[(b)][The voltage $U_C (t)$]{\includegraphics[width=6.8cm]{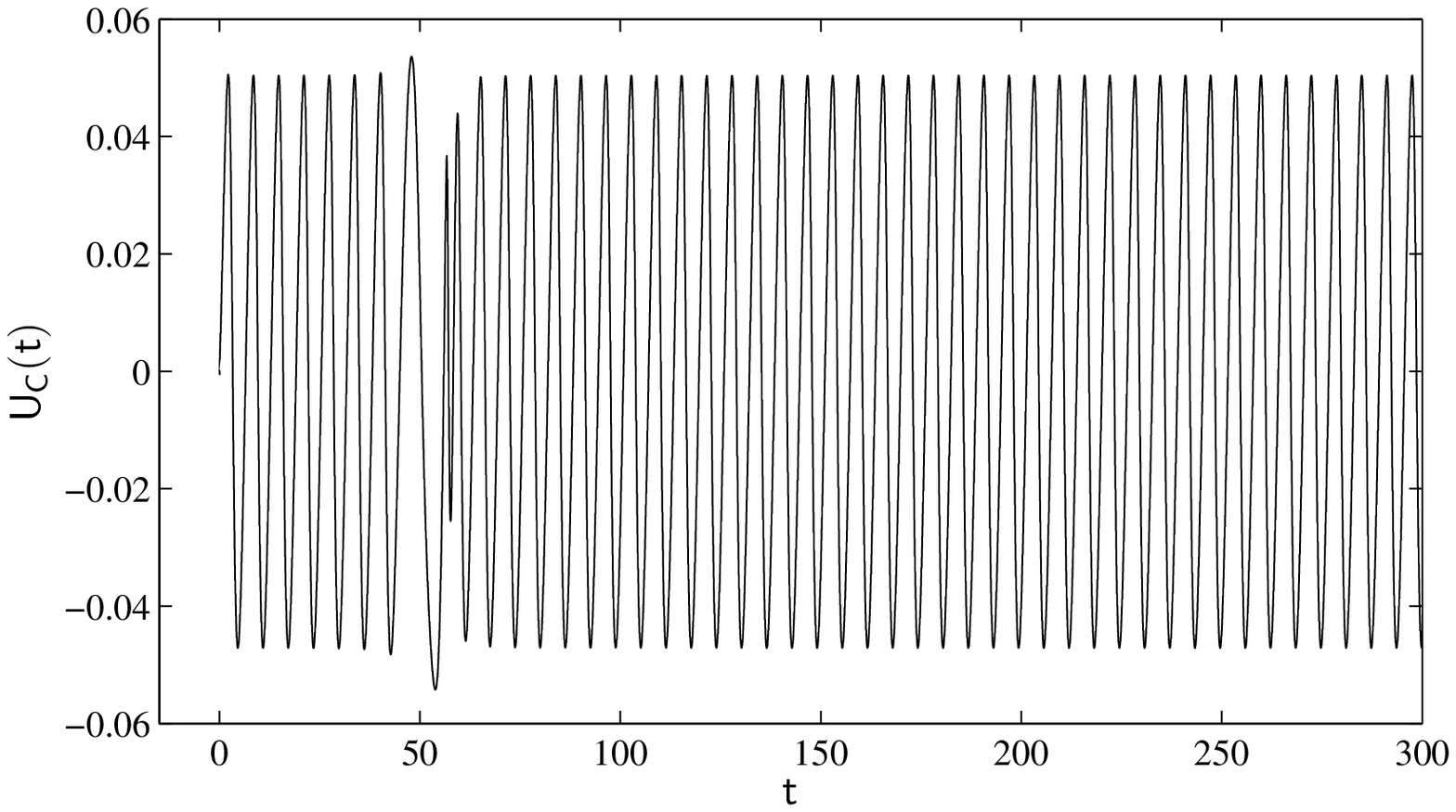}}
\end{figure}

\begin{figure}[H]%
\centering\footnotesize
\subfloat[(c)][The current $I(t)$]{\includegraphics[width=6.8cm]{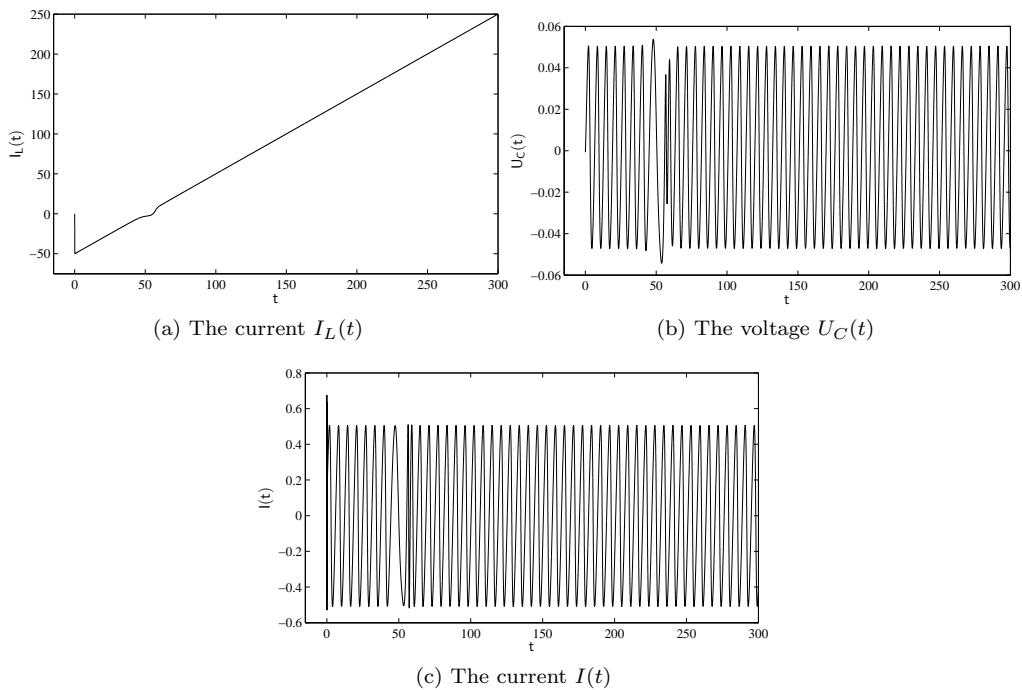}}
\caption{(a)--(c)\: The components of the numerical solution.}\label{5_76__5_78}
\end{figure}

The numerical solutions shown in Fig.~\ref{osf7_9}--\ref{5_67__5_69} are bounded on the corresponding time intervals. When we increase the time intervals by a factor of 5--10, the solutions are bounded similarly.
The analysis of these numerical solutions indicates that there exist bounded global solutions of the equation \eqref{DAE_MM} (the system \eqref{os1}--\eqref{os3}) with the input voltage of the form \eqref{os8} and the nonlinear resistances and conductance of the form \eqref{os6},~\eqref{os7}. The analysis of the numerical solutions shown in Fig.~\ref{osglf1_f3},~\ref{5_76__5_78} indicates that there exist global solutions, increasing without bound with an increase in time (as $t \to \infty$), for the equation \eqref{DAE_MM} (the system \eqref{os1}--\eqref{os3}) with the input voltage of the form \eqref{os9} and the nonlinear parameters of the form \eqref{os6},~\eqref{os7}. Similar results follow from the application of Theorem~\ref{Th_Ust1}. Therefore, the conclusions obtained with the help of this theorem are verified by a numerical experiment.


 \section{Lagrange instability of the  mathematical model of a radio engineering filter}\label{MatModNeust}

Consider the system \eqref{os1}--\eqref{os3} (the DAE \eqref{DAE_MM}) with the nonlinear resistances and conductance
\begin{equation}\label{osNeust1}
\varphi_0(x_1)=-x_1^2,\; \varphi(x_3)=x_3^3,\; \psi(x_1-x_3)=(x_1-x_3)^3,\;  h(x_2)=x_2^2.
\end{equation}
It is assumed that there exists $M_e = \mathop{\sup }\limits_{t\in [t_0,\infty )} |e(t)| < +\infty$.

The verification of the condition \eqref{soglreg2} and the condition for the operator function~\eqref{funcPhi} is similar to the verification, which  has been carried out in Section~\ref{MatModUst}, and, it is easy to verify that these requirements are fulfilled.

Denote $z=(x_1,x_2)^T\in {\mathbb R}^2$. Choose
\begin{equation}\label{Omega}
 \begin{split}
 \Omega_{{\mathbb R}^2}\! =&\!\biggl\{z=\begin{pmatrix} x_1 \\ x_2 \end{pmatrix}\!\in {\mathbb R}^2 \,\Big{|}\, x_1 > m_1,\: m_1 = \max \Bigl\{1\!+\!\sqrt{M_e},\, \sqrt[3]{g\!+\!\frac{1}{r}},\, \frac{3C}{L},  \\
 &\sqrt{\max\bigl\{\frac{L}{3rC}\!-\!\frac{r}{3}, 0\bigr\}}\, \Bigr\},\: x_2 < -rx_1 \hm-x_1^3 - m_2,\: m_2 = \max\bigl\{g-\frac{2Cr}{L},\, 0\bigr\} \biggr\},
 \end{split}
\end{equation}
\vspace*{-0.2cm}
\[
\Omega = \{x_{p_1}=P_1x\in X_1 \mid z\in  \Omega_{{\mathbb R}^2}\}.
\]
Since $x_{p_1}\!=\!P_1 x\!=\!(x_1, x_2, -r^{-1}x_2)^T$, then $x_{p_1}\!\in\! \Omega \hm\Leftrightarrow z\!\in\! \Omega_{{\mathbb R}^2}$. Obviously, $x_{p_1}\!=\!0 \!\not\in\! \Omega$.

The boundary of the region $\Omega_{{\mathbb R}^2}$ consists of the parts $x_1=m_1$ and $x_2 +rx_1 +x_1^3 +m_2\hm=0$. Since $x_1\ge m_1$,  $\frac{d}{dt}x_1 > 0$ and $x_2 +rx_1 +x_1^3 +m_2 \hm\le 0$, $\frac{d}{dt}(x_2 +rx_1 +x_1^3 +m_2) < 0$ for all $t\ge 0$, $x=(x_1,x_2,x_3)^T\! \in\! {\mathbb R}^3$ satisfying \eqref{os3} (the condition $(t,x)\!\in\! L_0$), where $z=(x_1,x_2)^T\! \in\! \overline\Omega_{{\mathbb R}^2}$ ($\overline\Omega_{{\mathbb R}^2}$ is the closure of $\Omega_{{\mathbb R}^2}$), the component $z(t)=(x_1(t),x_2(t))^T$ of each existing solution, which starts at time $t_0\ge 0$ in the region $\Omega_{{\mathbb R}^2}$, cannot leave this region. Consequently, the component $x_{p_1}(t)=P_1x(t)$ of each existing solution $x(t)$ with the initial point $(t_0,x^0)\in [0,\infty )\times {\mathbb R}^3$ ($x^0 =(x_1^0,x_2^0,x_3^0)^T$) satisfying \eqref{Consist}, where $P_1 x^0 \in \Omega$ ($(x_1^0,x_2^0)^T\in \Omega_{{\mathbb R}^2}$), remains all the time in $\Omega$.

We choose $H =\left(\begin{array}{ccc} 2L & 0 & 0 \\ 0 & C & 0 \\ 0 & 0 & Cr^2 \end{array} \right)$. Then for any $x=(x_1,x_2,x_3)^T$ satisfying \eqref{os3} and such that $(x_1,x_2)^T\in \Omega_{{\mathbb R}^2}$, the condition\, $\big(HP_1 x,G^{-1}[-BP_1 x +Q_1 f(t,x)]\big)\hm=\! 2\bigl[e(t)x_1-(g +r^{-1})x_2^2+ x_1^3+(r^{-1}x_2 -x_1)(x_1-x_3)^3 - x_2^3-r^{-1}x_2 x_3^3\bigr]\! >\!2[-(g +r^{-1})x_2^2+x_2^3]\! \hm\ge \alpha\, v^{3/2}$,
where $v=\frac{1}{2}(HP_1x,P_1x)=Lx_1^2+Cx_2^2$ and $\alpha>0$ is a certain constant, is fulfilled. Hence,
the condition \eqref{Lagr2}, where $k(t)\equiv 1$, $U(v)\hm=\alpha\, v^{3/2}$, is fulfilled.

Thus, all the conditions of Theorem~\ref{Th_Neust1} are satisfied.

\vspace*{-0.2cm}

 \subsection{Conclusions}\label{ConclInstab}

By Theorem~\ref{Th_Neust1} for each initial point $(t_0,x^0)\in [0,\infty )\times {\mathbb R}^3$ satisfying \eqref{Consist} and such that $(x_1^0,x_2^0)^T\in \Omega_{{\mathbb R}^2}$, where $\Omega_{{\mathbb R}^2}$ is the region \eqref{Omega}, there exists a unique solution of the Cauchy problem for the DAE \eqref{DAE_MM} with the initial condition \eqref{ini_MM}, where the functions $\varphi_0$, $\varphi$, $\psi$, $h$ have the form \eqref{osNeust1} and  $\mathop{\sup }\limits_{t\in [t_0,\infty )} |e(t)| < +\infty$, and this solution has a finite escape time (the solution exists on some finite interval and is unbounded).

In terms of physics it means that if $\mathop{\sup }\limits_{t\in [t_0,\infty )} |e(t)| < +\infty$ and the nonlinear resistances and conductance have the form \eqref{osNeust1}, then for any initial time moment $t_0\ge 0$ and any initial values $I_L(t_0)$, $U_C(t_0)$, $I(t_0)$ satisfying $U_C(t_0)+r I(t_0) \hm= \psi(I_L(t_0)-I(t_0)) - \varphi(I(t_0))$ and such that $(I_L(t_0),U_C(t_0))^T\in \Omega_{{\mathbb R}^2}$, on some finite interval $t_0\le t < T$ there exist the currents $I_L(t)$, $I(t)$ and voltage $U_C(t)$ in the circuit Fig.~\ref{MatMod1}, which are uniquely determined by the initial values, and $\mathop{\lim }\limits_{t\to T-0} \left\|(I_L(t), U_C(t), I(t))^T\right\|=+\infty $.

\subsection{Numerical analysis}\label{NumInstab}

We find approximate solutions for the DAE \eqref{DAE_MM} (the system \eqref{os1}--\eqref{os3}) with the functions of nonlinear resistances and conductance \eqref{osNeust1} and the initial condition \eqref{ini_MM}. Initial values  $t_0$, $x^0\! =\!(x_1^0,x_2^0,x_3^0)^T$\! are chosen so that \eqref{Consist} is satisfied and $(x_1^0,x_2^0)^T\!\!\in\! \Omega_{{\mathbb R}^2}$, where $\Omega_{{\mathbb R}^2}$ is~\eqref{Omega}.

Choose the parameters $L = 10$, $C = 0.5$, $r = 2$, $g =0.2$, the input voltage $e(t) = 2\, \sin t$ and the initial values $t_0 = 0$, $x^0 \hm=(2.45,\,  -20.625125,\, 2.5)^T$. The components of the obtained numerical solution are shown in Fig.~\ref{osnf1_f3}.
\begin{figure}[H]%
\centering\footnotesize
\subfloat[(a)][The current $I_L(t)$]{\includegraphics[width=4.5cm]{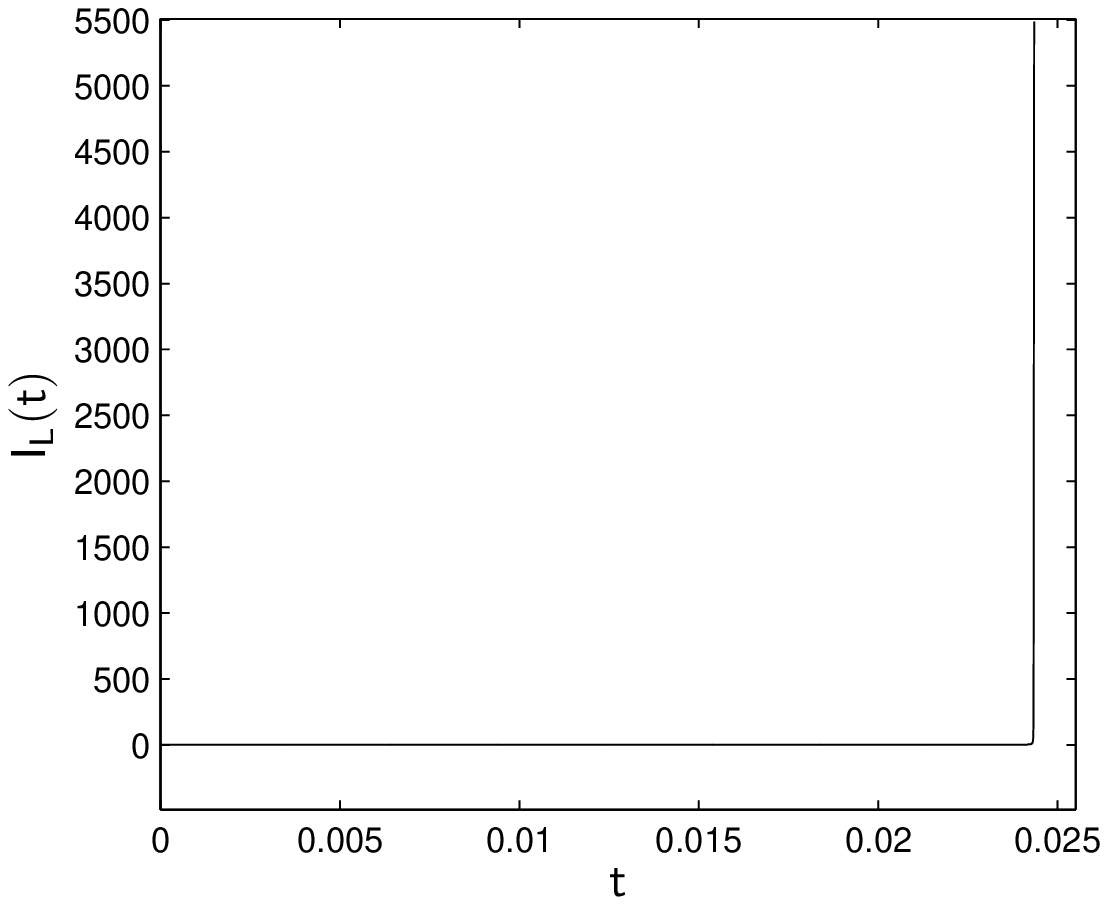}}
\subfloat[(b)][The voltage $U_C (t)$]{\includegraphics[width=4.5cm]{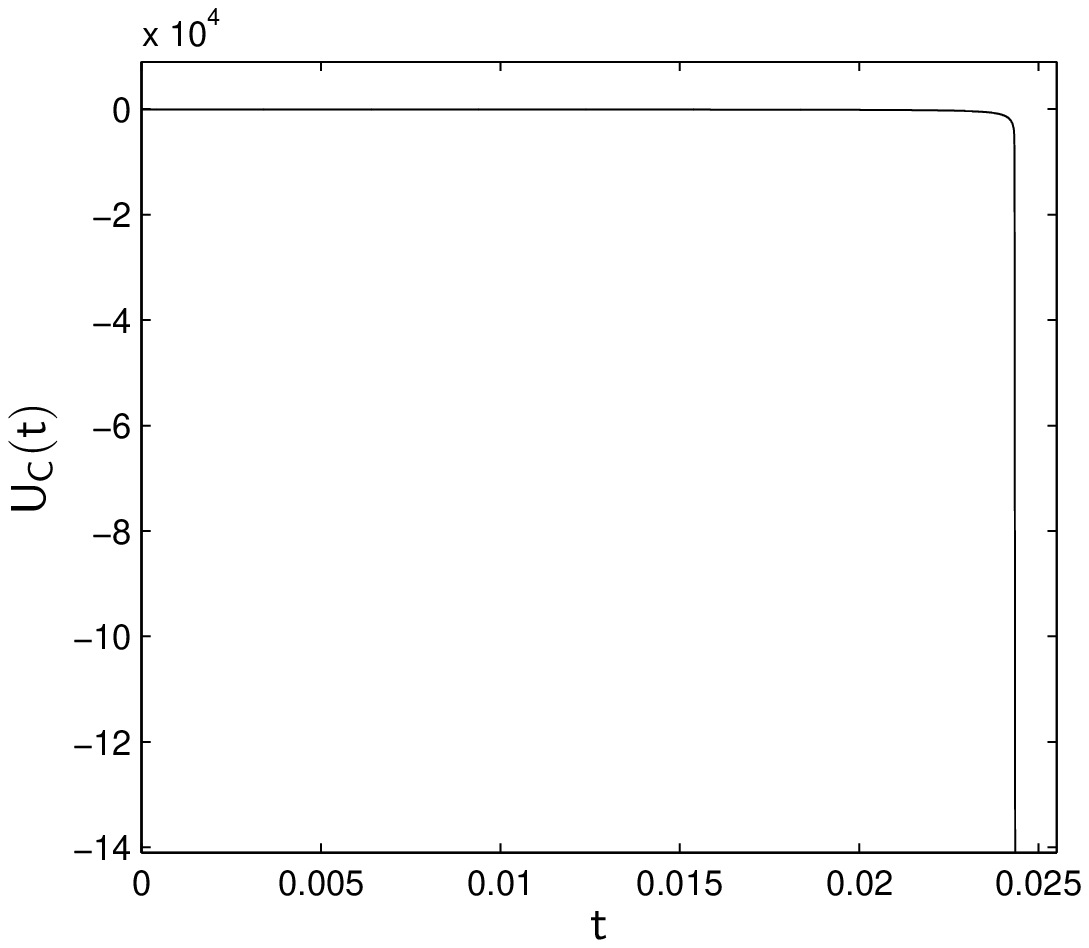}}
\subfloat[(c)][The current $I(t)$]{\includegraphics[width=4.5cm]{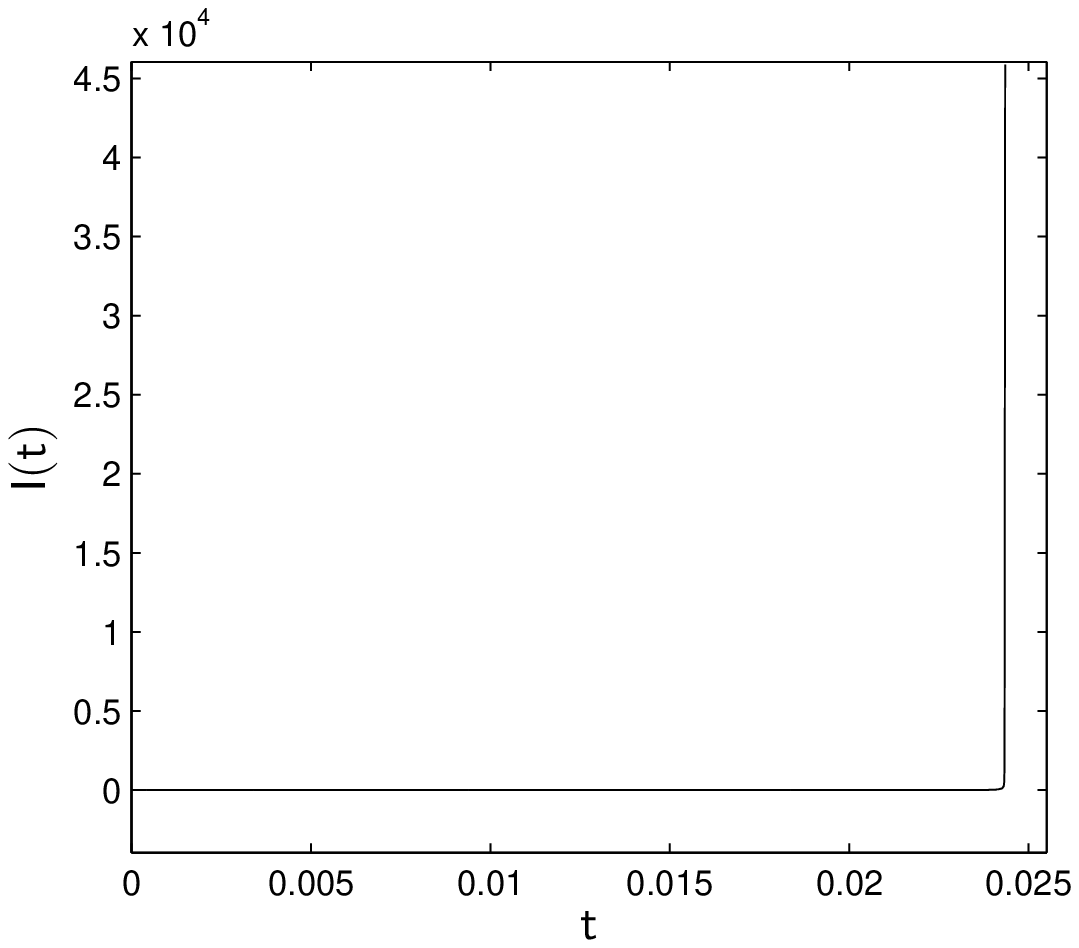}}
\caption{(a)--(c)\: The components of the numerical solution.}\label{osnf1_f3}
\end{figure}

For the electrical circuit with the linear parameters $L = 5$, $C = 0.5$, $r = 2$, $g =0.5$ and the input voltage $e(t) = 0$, the components of the numerical solution with the initial values $t_0 = 0$,  $x^0 \hm=(1.1,\,  -4.129,\, 1.2)^T$ have the form similar to that shown in Fig.~\ref{osnf1_f3}.

The analysis of the obtained numerical solutions indicates that the corresponding exact solutions have a finite escape time and verifies the results obtained with the help of Theorem~\ref{Th_Neust1}.

\vspace*{-0.3cm}

 \section{Conclusions}\label{Conclusions}

\vspace*{-0.2cm}

The theorems, enabling to prove the existence and boundedness of global solutions (Lagrange stability) of the semilinear DAE \eqref{DAE} or their absence (solutions have a finite escape time, i.e., they are Lagrange unstable), are obtained.  Using the obtained theorems, we have found the restrictions on the initial data and the parameters of the electrical circuit (Fig.~\ref{MatMod1}) of the nonlinear radio engineering filter under which the mathematical model (the DAE \eqref{DAE_MM}) of the circuit is Lagrange stable, and the conditions under which the mathematical model is Lagrange unstable. The concrete functions and quantities defining the circuits parameters (resistances, conductivities and others) and satisfying the obtained conditions have been given.  It has been checked that the mentioned conditions of the Lagrange stability are fulfilled for certain classes of nonlinear functions, which do not satisfy the global Lipschitz condition.
In particular, it has been proven that the presence of nonlinear resistances and conductivities of the form~\eqref{os6},~\eqref{os7} in electric circuits admits the Lagrange stability of the corresponding mathematical models. Notice that nonlinear resistances and conductivities of such type are often encountered in real radio engineering systems.

The results of the investigation of the mathematical model have shown that a practical check of the conditions of the obtained theorems is sufficiently effective and these conditions can be physically feasible. The analysis of the numerical solutions of the mathematical model verifies the results of theoretical investigations.

\vspace*{0.1cm}

\textbf{Acknowledgements.} Supported in part by the Akhiezer Foundation and by the National Academy of Sciences of Ukraine.

\vspace*{-0.3cm}

\renewcommand{\refname}{\centering\textbf{\normalsize References}}

\bibliographystyle{plainnat}
\bibliography{refFilipk}

\end{document}